\definecolor{shadecolor}{RGB}{150,150,150}
\newcommand{\eps}{\varepsilon}
\newcommand{\R}{\mathbb{R}}
\newcommand{\I}{\mathcal{I}}
\newcommand{\J}{\mathcal{J}}
\newcommand{\X}{\mathcal{X}}
\newcommand{\Z}{\mathbb{Z}}
\newcommand{\A}{\mathcal{A}}
\newcommand{\es}[1]{\begin{equation}\begin{split}#1\end{split}\end{equation}}
\newcommand{\est}[1]{\begin{equation*}\begin{split}#1\end{split}\end{equation*}}
\newcommand{\as}[1]{\begin{align}#1\end{align}}
\renewcommand{\ast}[1]{\begin{align*}#1\end{align*}}
\newcommand{\M}{\mathcal{M}}
\newcommand{\LL}{\mathcal{L}}
\newcommand{\p}{\mathfrak{p}}
\newcommand{\q}{\mathfrak{q}}
\newcommand{\tn}[1]{\textnormal{#1}}
\renewcommand{\mod}[1]{~\pr{\textnormal{mod}~#1}}
\newtheorem*{theo*}{Theorem}
\newtheorem{theorem}{Theorem}
\newtheorem{lemma}{Lemma}
\newtheorem{corol}{Corollary}
\newtheorem{remark}{Remark}
\newtheorem*{rem*}{Remark}
\newcommand{\pr}[1]{\left( #1\right)}
\newcommand{\prbigg}[1]{\bigg( #1\bigg)}
\newcommand{\prBig}[1]{\Big( #1\Big)}
\newcommand{\pg}[1]{\left\{ #1\right\}}
\newcommand{\pmd}[1]{\left| #1\right|}
\newcommand{\e}[1]{\operatorname{e}\pr{ #1}}
\newcommand{\Nb}{\mathcal{N}}
\def\sumstar{\operatornamewithlimits{\sum\nolimits^*}}
\newcommand{\sumfive}{\operatorname*{\sum\sum\sum\sum\sum}}
\newcommand{\sumtwo}{\operatorname*{\sum\sum}}
\newcommand{\sumthree}{\operatorname*{\sum\sum\sum}}
\newcommand{\sumfour}{\operatorname*{\sum\sum\sum\sum}}
\newcommand{\sumsix}{\operatorname*{\sum\sum\sum\sum\sum\sum}}
\newcommand{\sumseven}{\operatorname*{\sum\sum\sum\sum\sum\sum\sum}}
\newcommand{\comment}[1]{}
\let\originalleft\left
\let\originalright\right
\renewcommand{\left}{\mathopen{}\mathclose\bgroup\originalleft}
\renewcommand{\right}{\aftergroup\egroup\originalright}
\numberwithin{equation}{section}
\newcommand{\datep}[1]{%
  \let\@oldtitle\@title%
  \gdef\@title{\@oldtitle\footnotetext{\emph{Date:} #1.}}%
}
\newcommand{\subjclass}[2][2010]{%
  \let\@@oldtitle\@title%
  \gdef\@title{\@@oldtitle\footnotetext{#1 \emph{Mathematics subject classification.} #2}}%
}
\newcommand{\keywords}[1]{%
  \let\@@@oldtitle\@title%
  \gdef\@title{\@@@oldtitle\footnotetext{\emph{Key words and phrases.} #1.}}%
}
\newcommand{\addresses}{{
  \bigskip
  \footnotesize

  S.~Bettin, \textsc{Centre de Recherches Math\'ematiques, Universit\'e de Montr\'eal C.P. 6128;
succursale Centre-ville, Montr\'eal (Qu\'ebec); H3C 3J7 Canada
}\par\nopagebreak
  \textit{E-mail address}: \texttt{bettin@crm.umontreal.ca}

  \medskip

  V.~Chandee, \textsc{Department of Mathematics, Burapha University; Chonburi; 20131 Thailand }\par\nopagebreak
  \textit{E-mail address}: \texttt{vorrapan@buu.ac.th}

}}
\begin{document}
\author{Sandro Bettin and Vorrapan Chandee}
\title{Trilinear forms with Kloosterman fractions}

\subjclass[2010]{11M06, 11M26}

\maketitle

\allowdisplaybreaks
\numberwithin{equation}{section}
\selectlanguage{english}
\begin{abstract}
We give new bounds for $\sum_{{a, m ,n}}\alpha_{m}\beta_n\nu_a\e{\frac{a\overline m}{n}}$ where $\alpha_{m}$, $\beta_n$ and $\nu_a$ are arbitrary coefficients, improving upon a result of Duke, Friedlander and Iwaniec~\cite{DFI97}. We also apply these bounds to problems on representations by determinant equations and on the equidistribution of solutions to linear equations.

\end{abstract}

\section{Introduction}
For $a,b,c$ positive integers, one defines the classical Kloosterman sum as
\est{
S(a,b;c):=\sumstar_{x\mod c}\e{\frac{a\overline x+b x}{c}}
}
where, as usual, $\overline x$ denotes the multiplicative inverse of $x$ modulo the denominator $c$, $\sumstar$ denotes a sum over the reduced residues modulo $c$, and $\e{x}:=e^{2\pi ix}$.

Several important results in number theory have been obtained by using bounds for single Kloosterman sums such as Weil's bound, $S(a,b,c)\ll (a,b, c)^{\frac12}c^{\frac12+\eps}$, or, more recently, for averages of Kloosterman sums, in particular the bounds of Deshouillers and Iwaniec~\cite{DI}.

The results of~\cite{DI} are particularly efficient when considering averages of $S(a,b,c)$ with weights $f(a,b,c)$ that are smooth or have at least some special structure.
For many applications, however, one would like to have non-trivial bounds in the case of arbitrary weights and such bounds would be useful also when the coefficients have arithmetic/geometric nature, but have conductor which is too large to be able to employ the extra information.

In the beautiful paper~\cite{DFI97}, Duke, Friedlander and Iwaniec addressed this problem, obtaining a non-trivial bound for the following ``bilinear form with Kloosterman fractions'':
\est{
\mathcal{B}_a(M,N):=\sumtwo_{\substack{m\in\M ,n\in\Nb ,\\(m,n)=1}}\alpha_{m}\beta_n\e{\frac{a\overline m}{n}},
}
where $\alpha_m, \beta_n$ are arbitrary coefficients supported on $ \M:=[M/2, M]$ and $\Nb:=[ N/2,N]$ respectively, and $ a \neq0$. The main result of~\cite{DFI97} is the bound
\es{\label{ewfc}
\mathcal{B}_a(M,N)&\ll \|\alpha\| \|\beta\| (a+MN)^{\frac38}(M+N)^{\frac{11}{48}+\eps},
}
where $\|\cdot\|$ denotes the $L^2$-norm. Notice that, in the important case $M\approx N$, $a\ll MN$, the bound in~\eqref{ewfc} saves roughly a power of $N^{\frac1{48}}$ over the trivial bound $\mathcal{B}_a(M,N)\ll \|\alpha\| \|\beta\| (MN)^{\frac12}$. In this paper, we refine the arguments of Duke, Friedlander and Iwaniec and improve upon their bound, obtaining a saving of $N^{\frac1{20}}$ when $M\approx N$, $a\ll MN$. More generally, we consider the case when an extra average over $a$ is introduced, as this often appears in applications. We then provide a new bound for sums of the form
\est{
\mathcal{B}(M,N,A):=\sumthree_{\substack{a\in\A, m \in\M,n\in\Nb ,\\(m,n)=1}}\alpha_{m}\beta_n\nu_a\e{\vartheta\frac{a\overline m}{n}},
}
where $\nu_a$ are arbitrary coefficients supported on $\A:=[A/2,A]$ and $\vartheta\neq0$. 

\begin{theorem} \label{thm:boundTri}
Let $\vartheta\neq 0 $. Then 
\es{\label{mrthm:boundTri}
\mathcal{B}(M,N,A)&\ll \|\alpha\| \|\beta\|\|\nu\| \Big(1+\frac{|\vartheta|A}{MN}\Big)^\frac{1}{2}\\
&\hspace{0.1em}\times\hspace{-0.25em}\pr{(AMN)^{\frac7{20}+\eps}(M+N)^{\frac14}+(AMN)^{\frac38+\eps}(AN+AM)^\frac18}.
}
\end{theorem}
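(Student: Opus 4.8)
The strategy is to build on the Duke–Friedlander–Iwaniec method, treating the extra $a$-average as a resource rather than an obstacle, and then to optimize the parameters more carefully than in~\cite{DFI97}. The starting point is the elementary identity $\e{\vartheta a\overline m/n}=\e{\vartheta a\overline m/n+\vartheta a\overline n/m}\e{-\vartheta a\overline n/m}$, which, after writing $\overline m/n+\overline n/m\equiv 1/(mn)\pmod 1$, converts the Kloosterman fraction into a product of a ``nice'' term $\e{\vartheta a/(mn)}$ and a reciprocal fraction $\e{-\vartheta a\overline n/m}$. The first factor is smooth in $a,m,n$ once one localizes dyadically and uses the hypothesis to control the size of $\vartheta A/(MN)$ (this is the source of the $(1+|\vartheta|A/(MN))^{1/2}$ factor); the second factor has the same shape as the original sum but with the roles of $m$ and $n$ interchanged. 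The plan is to apply Cauchy–Schwarz in the variable of larger length (say $n$), open the square, and execute the resulting diagonal/off-diagonal analysis: the off-diagonal terms produce complete or incomplete Kloosterman-type sums in the $a$ and $m$ variables, to which one applies Weil's bound together with Poisson summation (completion) in the long variable.

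Concretely, after Cauchy–Schwarz in $n$ one is led to bound $\sum_n \big|\sum_{a,m}\alpha_m\nu_a\,w(a,m,n)\,\e{-\vartheta a\overline n/m}\big|^2$ where $w$ absorbs the smooth factor. Expanding the square gives a sum over $m_1,m_2,a_1,a_2$ with an inner sum over $n$ of $\e{\vartheta n(a_2\overline{m_2}-a_1\overline{m_1})\cdot(\text{something})}$ times a smooth weight of length $N$; here one detects the congruence/size conditions via Poisson summation in $n$, turning the inner sum into a main term (the diagonal $a_1\overline{m_1}\equiv a_2\overline{m_2}$, frequency zero) plus a dual sum whose terms involve Kloosterman sums $S(\cdot,\cdot;m_1m_2)$ estimated by Weil. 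The diagonal contributes $\|\alpha\|^2\|\beta\|^2\|\nu\|^2$ times a power-saving-free term whose size is governed by the number of solutions to $a_1\overline{m_1}\equiv a_2\overline{m_2}$, i.e.\ by a divisor-type bound, and this ultimately yields the first term $(AMN)^{7/20+\eps}(M+N)^{1/4}$. The dual (off-diagonal) contribution, after Weil and trivial estimation of the remaining sums, produces the second term $(AMN)^{3/8+\eps}(AN+AM)^{1/8}$.

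The main obstacle, as in~\cite{DFI97}, is the treatment of the off-diagonal sum after completion: one must handle the Kloosterman sums $S(h a_1\overline{m_2}- \cdots;m_1m_2)$ with the moduli $m_1,m_2$ ranging over a dyadic box and the dual variable $h$ ranging up to roughly $M^2/N$, while keeping careful track of the common-divisor savings $(m_1,m_2)$ and of the $a_1,a_2$ averages. The refinement over~\cite{DFI97} comes precisely here: rather than bounding the $a$-sums trivially at the first opportunity, one retains the bilinear structure in $(a,m)$ through a second application of Cauchy–Schwarz (or, equivalently, an appeal to the large sieve / a mean-value estimate for the relevant exponential sums), which is what upgrades the exponent $\tfrac{11}{48}$ of~\cite{DFI97} towards $\tfrac14$ and simultaneously introduces the clean dependence on $A$. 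One then balances the lengths of the completed sums against the size of $A$, $M$, $N$ to arrive at the stated bound; the parameter optimization is routine once the two structural estimates (diagonal and Weil-bounded off-diagonal) are in place.

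I would expect the book-keeping of the coprimality conditions $(m,n)=1$, the interaction of $\vartheta$ with the archimedean weights, and the precise range of validity of the completion step (ensuring the error terms from Poisson summation are negligible) to be the most delicate technical points, but none of these should require ideas beyond those already present in~\cite{DFI97} combined with the extra averaging over $a$.
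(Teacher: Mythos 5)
Your plan drops the two structural ideas that make the Duke--Friedlander--Iwaniec argument (and this paper's refinement of it) work: the amplification step and the switch to the complementary divisor. Without them the stated bound is out of reach. Concretely, after reciprocity and Cauchy--Schwarz in $n$, the inner sum over $n$ is an incomplete Kloosterman sum of length $\approx N$ to modulus $[m_1,m_2]\approx M^2$. Completing by Poisson and applying Weil gives $\ll M^{1+\eps}$ for that inner sum, which beats the trivial bound $N$ only when $M\ll N^{1-\eps}$ and gives no saving at all in the critical regime $M\approx N$; the same obstruction appears if one instead applies Cauchy--Schwarz in $m$ first (the modulus is then $n_1n_2\approx N^2$). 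The role of the amplifier $\sum_{\chi\bmod m}|\sum_{\ell\in\LL}\chi(\ell)|^2|\cdots|^2$ is precisely to bypass this: character orthogonality modulo $m$ produces the congruence $\ell_1 n_1\equiv\ell_2 n_2\ (\bmod\ m)$, whose complementary divisor $d=(\ell_1 n_1-\ell_2 n_2)/m$ has size only $O(LN/M)$; eliminating $m$ in favour of $d$ and then invoking the reciprocity identity~\eqref{cond:3congruence} eventually yields an incomplete Kloosterman sum of length $\approx N$ to a modulus $\approx L^2N\ll M^2$, where Weil does give a genuine power saving, and the parameter $L$ can then be optimised.

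Two secondary points are also off. First, the assertion that the diagonal is controlled by a ``divisor-type bound'' on $\{a_1\overline{m_1}\equiv a_2\overline{m_2}\}$ and already accounts for the term $(AMN)^{7/20+\eps}(M+N)^{1/4}$ is unsupported: in the paper even the ``diagonal'' terms $\ell_1 n_1=\ell_2 n_2$ require a further application of Weil's bound to be controlled (Section~\ref{sec:diagonal_ln_equald}), and the exponent $\tfrac{7}{20}$ emerges only after the final optimisation of the amplifier length $L$ against all the other contributions, not from a diagonal count alone. Second, the source of the improvement over the $\tfrac{11}{48}$ of~\cite{DFI97} is not, as you suggest, ``retaining the bilinear structure in $(a,m)$'' through a second Cauchy--Schwarz or a large-sieve input; it is the choice to keep a longer diagonal in the second Cauchy--Schwarz applied to the off-diagonal sum (the paper leaves $d,a_1,\ell_1,\ell_2$ outside the square, whereas~\cite{DFI97} leave only $\ell_1,\ell_2$ outside), together with the Weil treatment of the diagonal and the careful bookkeeping of the extra $a$-average. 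Your high-level plan would need to be rebuilt around the amplifier and the complementary divisor to reach the claimed exponents.
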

Notice that when $|\vartheta|A\ll MN$ and $M \approx N$, the above bound improves the trivial bound $\mathcal{B}(M,N,A)\ll \|\alpha\| \|\beta\|\|\nu\| (AMN)^{\frac12}$ by roughly $\min (A^{\frac 3{20}}N^{\frac1{20}},N^\frac18$).

\begin{remark}\label{ptrmk}
One can perturb slightly the argument of the exponential function and still get the same bound. Indeed, if $f_{a,\vartheta}(x,y)\in\mathcal {C}^1(\R^2)$ is such that 
\es{\label{tckgf}
\frac{\partial}{\partial x}f_{a,\vartheta}(x,y)\ll \frac{X}{x^2y},\qquad \frac{\partial}{\partial y}f_{a,\vartheta}(x,y)\ll \frac{X}{xy^2},\qquad \forall x\in\Nb, \forall y\in\M
} 
for some $X>1$ and any $a,\vartheta$, then when $\theta\neq0$ we have that
\est{
\mathcal{B}_f(M,N,A)&:=\sumthree_{\substack{m\in\M ,n\in\Nb a\in\A,\ (m,n)=1}}\alpha_{m}\beta_n\nu_a\e{\vartheta\frac{a\overline m}{n}+f_{a,\vartheta}(m,n)}
}
satisfies the same bound~\eqref{mrthm:boundTri} of $\mathcal{B}(M,N,A)$ provided that the factor $(1+{|\vartheta|A}/({MN}))^\frac{1}{2}$ is replaced by $(1+(|\vartheta|A+X)/({MN}))^\frac{1}{2}$. 
\end{remark}

Our main motivation for this paper concerns the second moment of the Riemann zeta-function times an arbitrary Dirichlet polynomial,
\est{
I:=\int_{\R}\pmd{\zeta(\tfrac12+it)A(\tfrac12+it)}^2\Phi\pr{t/{T}}\,dt,
}
where $\Phi(x)$ is a test function (supported on $[1,2]$, say) and $A(s):=\sum_{n\leq T^{\theta}}\frac{a_n}{n^{s}}$ with $a_n$ arbitrary coefficients with $a_n\ll n^\eps$. Balasubramanian, Conrey and Heath-Brown \cite{BCH} computed the asymptotic for $I$ when $\theta<\frac12$. In this case only the ``diagonal terms'' contribute. In order to break the $\frac12$ barrier one has to deal with the ``off-diagonal terms'', which quickly leads to the problem of obtaining non-trivial bounds for $\mathcal{B}(M,N,A)$. Equation~\eqref{ewfc} and the stronger Theorem~\ref{thm:boundTri} provide such non-trivial bounds, and so, in a joint work with Radziwi\l\l~\cite{BCR}, we were able to compute the asymptotic for $I$ for $\theta<\frac{17}{33}$. As a comparison, the use of~\eqref{ewfc} would have given the same result on the smaller range $\theta<\frac{48}{95}$. In the same work, we also formulate a conjectural bound for $\mathcal{B}$ which, if true, would allow to extend the range to $\theta<1$ and thus imply the Lindel\"of hypothesis.

The flexibility of Theorem~\ref{thm:boundTri} makes it feasible to be applied to a wide class of problems in number theory. Moreover, the strength of the new bound is now competitive even in some cases when one knows and could potentially employ some information on the coefficients. 

We also give two easy applications of Theorem~\ref{thm:boundTri}. One could use the new bound also to improve some results proved using~\eqref{ewfc} (e.g.~\cite{DFI12} and~\cite{BS}) or, possibly, to sharpen sub-convexity bounds for automorphic $L$-functions (see~\cite{DFI02}) or to handle sums such as those considered by Fouvry~\cite{Fou} and Bombieri, Friedlander and Iwaniec~\cite{BFI}.

The first corollary deals with representations by determinant equations and improves the main result of~\cite{DFI95}, whereas the second concerns with the equidistribution of solutions to linear equations and improves upon a theorem of Shparlinski~\cite{Shp}.
\begin{corol}\label{c1}
Let $\Delta\neq0$ and let
\est{
\mathcal{T}(M_1,M_2,N_1,N_2):=\sumfour_{\substack{m_1\in\M_1,m_2\in\M_2, n_1\in\Nb_1,n_2\in\Nb_2,\\ m_1n_2-m_2n_1=\Delta}}f(m_1)g(m_2)\alpha_{n_1}\beta_{n_2},
}
where $f(m_1)$, $g(m_2)$, $\alpha_{n_1}$ and $\beta_{n_2}$ are supported on $\M_{1}:=[M_1/2,M_1]$, $\M_{2}:=[M_2/2,M_2]$, $\Nb_{1}:=[N_1/2,N_1]$ and $\Nb_{2}:=[N_2/2,N_2]$ respectively. Moreover, assume
$f^{(j)}\ll \eta^{j}M_1^{-j}, g^{(j)}\ll \eta^{j}M_2^{-j}$, for all $j\geq0$ and some $\eta>1$. Then
\es{\label{repd}
\mathcal{T}(M_1,M_2,N_1,N_2)&=\sum_{\substack{n_1\in\Nb_1,n_2\in\Nb_2,\\(n_1,n_2)|\Delta}}\frac{(n_1,n_2)}{n_1n_2}\alpha_{n_1}\beta_{n_2}\int_\R f\pr{\frac{x+\Delta}{n_2}}g\pr{\frac x{n_1}}\,dx\\
&\quad+O\bigg((\eta R)^{\frac{3}{2}}\|\alpha\|\|\beta\| (N_1N_2)^{\frac7{20}}(N_1+N_2)^{\frac14+\eps} (M_1M_2)^\eps\bigg),
}
where $R:=\frac{M_1N_2}{M_2N_1}+\frac{M_2N_1}{M_1N_2}$.
\end{corol}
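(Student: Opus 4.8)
The plan is to detect the determinant condition $m_1 n_2 - m_2 n_1 = \Delta$ by opening it into additive characters and then recognizing a Kloosterman fraction, so that Theorem~\ref{thm:boundTri} (in the perturbed form of Remark~\ref{ptrmk}) can be applied. First I would fix $n_1, n_2$ and consider the inner sum over $m_1, m_2$; the congruence $m_1 n_2 \equiv \Delta \pmod{n_1}$ together with $m_2 = (m_1 n_2 - \Delta)/n_1$ shows that $m_2$ is determined by $m_1$, and $m_1$ runs over an arithmetic progression modulo $n_1/(n_1,n_2)$. Writing $d := (n_1, n_2)$, one separates the diagonal-type contribution coming from $d \mid \Delta$: by Poisson summation (or Euler–Maclaurin) in $m_1$ along the progression, the zero frequency produces exactly the main term
\est{
\sum_{\substack{n_1\in\Nb_1,n_2\in\Nb_2,\\(n_1,n_2)\mid\Delta}}\frac{(n_1,n_2)}{n_1n_2}\alpha_{n_1}\beta_{n_2}\int_\R f\pr{\frac{x+\Delta}{n_2}}g\pr{\frac{x}{n_1}}\,dx,
}
while the nonzero frequencies $h \neq 0$ introduce a phase $\e{\tfrac{h \overline{n_2} \Delta / d}{n_1/d}}$ up to a smooth factor, i.e. precisely a Kloosterman fraction with numerator $\asymp h\Delta$ running over a box, modulus $n_1/d$, and inverse of (a unit times) $n_2$.

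Next I would reduce to the coprime setting required by $\mathcal B$: after pulling out $d$, the remaining moduli $n_1/d$ and the residues are coprime, and the smooth cutoff in $m_1$ coming from Poisson is a function whose partial derivatives satisfy the decay hypotheses~\eqref{tckgf} with $X$ of size $\asymp R$ (this is where the condition $f^{(j)} \ll \eta^j M_1^{-j}$, $g^{(j)} \ll \eta^j M_2^{-j}$ enters, contributing the $\eta$ factors). The ranges are $A \asymp H$ for the $h$-sum (where $H$ is the truncation point of the Poisson frequencies, essentially $H \asymp (n_1/d)(M_1)^{-1} \cdot (\text{something})$, so $HN_1 \asymp \eta R \cdot$ size), $M \asymp N_1/d$, $N \asymp N_2/d$, and $\vartheta$ a fixed nonzero rational times $\Delta/d$. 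Applying Theorem~\ref{thm:boundTri} with Remark~\ref{ptrmk} (so the prefactor becomes $(1 + (|\vartheta|A + X)/(MN))^{1/2} \ll (\eta R)^{1/2}$, and the truncation of the Poisson sum contributes another $(\eta R)$), summing the geometric-type series over $d \mid (N_1, N_2)$ and over the frequencies $h$, and bounding $\|\nu_h\| \ll (\#\{h\})^{1/2}$ trivially, yields the stated error term with the $(N_1 N_2)^{7/20}(N_1+N_2)^{1/4+\eps}$ shape; the second term in~\eqref{mrthm:boundTri} is absorbed since in this regime $AN+AM \ll (\eta R)(N_1+N_2)$ and a short computation shows it does not dominate (or one simply keeps it and checks it is smaller than the quoted bound under $a_n \ll n^\eps$-type sizes).

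The main obstacle I expect is the bookkeeping around the Poisson step: choosing the right truncation height $H$ for the dual sum so that (i) the tail is negligible and (ii) $H$ is as small as possible to keep $A \asymp H$ small, while simultaneously verifying that the resulting smooth weight genuinely satisfies the derivative bounds~\eqref{tckgf} uniformly in all the parameters — in particular tracking how $\eta$ and the ratios $M_1N_2/(M_2N_1)$ propagate through differentiation of $f\big((x n_2 - \Delta)/(n_1)\big) g(x/\cdot)$ composed with the Poisson kernel. Getting the exponent of $\eta R$ to come out as exactly $3/2$ requires being careful that the prefactor $(1+(|\vartheta|A+X)/(MN))^{1/2}$ contributes only $(\eta R)^{1/2}$ and the frequency truncation contributes $(\eta R)^1$, with no further losses from the $d$-summation (which is controlled by the $(n_1,n_2)/(n_1 n_2)$ weight and is harmless). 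Everything else — separating the main term, handling coprimality, summing over $d$ — is routine.
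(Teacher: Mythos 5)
Your plan matches the paper's proof, which closely follows DFI95: detect the determinant condition $m_1n_2-m_2n_1=\Delta$ by Poisson summation over the residual $m$-variable running in a progression modulo $n_1/(n_1,n_2)$, read off the main term from the zero frequency, recognize the nonzero frequencies $h$ as giving Kloosterman fractions $\e{\frac{h\Delta}{d}\frac{\overline{n}_1}{n_2}}$ with a perturbation $\e{\frac{hx}{dn_1n_2}}$ satisfying~\eqref{tckgf}, and then apply Theorem~\ref{thm:boundTri} via Remark~\ref{ptrmk}, summing over $d\mid\Delta$ and over $|h|\le H D^\eps$ with $H=\frac{\eta}{d}(\frac{N_1}{M_1}+\frac{N_2}{M_2})$ and $D=M_1N_2+M_2N_1$, which produces the factor $(\eta R)\cdot(\eta R)^{1/2}$. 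One small point worth pinning down: the paper applies the theorem once per fixed $h$ (so $A=1$), in which case the second term of~\eqref{mrthm:boundTri} is automatically dominated because $(MN)^{1/40}\le(M+N)^{1/8}$; your variant with $A\asymp H$ and $\|\nu\|\asymp H^{1/2}$ also closes, but the ``short computation'' you defer must be done to see that the $H$-powers cancel and the second term is again absorbed.
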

For comparison, Duke, Friedlander and Iwaniec~\cite{DFI95} obtained the same result with the error term
\est{
O\Big((\eta R)^{\frac{19}8}\|\alpha\|\|\beta\|(N_1N_2)^{\frac 38}(N_1+N_2)^{\frac{11}{48}+\eps}(M_1M_2)^\eps\Big).
}
We remark that one can use Theorem~\ref{thm:boundTri} to obtain stronger results when averaging over $\Delta$ (cf.~\cite{BCR}).

\begin{corol}
For any positive coprime integers $m,n$, let $\rho_{m,n}:=\frac{a_0}{m}$ where $(a_0,b_0)$ is the smallest positive solution to $am-bn=1$. For any  set of integers $\X_N\subseteq [0,N]$, let $\mathcal R_{\X_N}:=\pg{\rho_{m,n}\mid  m,n\in\X_N}$. Then, if $\X_N$ has cardinality $|\X_N|\gg N^{1-\frac{1}{20}}$ for some $\eps>0$, then the set $\mathcal R_{\X_N}$ is equidistributed on the interval $[0,1]$ as $N\rightarrow\infty$.
\end{corol}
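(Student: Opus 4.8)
The plan is to verify Weyl's criterion by means of Theorem~\ref{thm:boundTri}. By Weyl's equidistribution criterion it suffices to show that for every fixed nonzero integer $h$ one has
\begin{equation*}
S_h:=\sum_{\substack{m,n\in\X_N\\(m,n)=1}}\e{h\rho_{m,n}}=o(|\X_N|^2)\qquad(N\to\infty).
\end{equation*}
First I would make $\rho_{m,n}$ explicit. For coprime $m,n\geq 2$ the smallest positive solution of $am-bn=1$ has $a_0$ equal to the inverse of $m$ modulo $n$ taken in $\{1,\dots,n-1\}$: the positivity of $b_0$ is automatic, since $a_0m\equiv 1\pmod n$ together with $a_0m\geq m\geq 2$ forces $a_0m\geq n+1$. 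Hence $\rho_{m,n}=a_0/m=\tfrac nm\{\overline m/n\}$, where $\{\overline m/n\}\in[0,1)$ denotes the fractional part of $x/n$ for any representative $x$ of $\overline m$; the $O(N)$ pairs with $m=1$ or $n=1$ are negligible.

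Next I would expand $\e{h\rho_{m,n}}=\e{\tfrac{hn}{m}\{\overline m/n\}}$ into its Fourier series in the variable $\{\overline m/n\}$,
\begin{equation*}
\e{h\rho_{m,n}}=\sum_{k\in\Z}c_k\!\Big(\frac{hn}{m}\Big)\,\e{\frac{k\overline m}{n}},\qquad c_k(\lambda):=\int_0^1\e{(\lambda-k)t}\,dt,
\end{equation*}
and truncate the sum to $\big|k-\lfloor hn/m\rfloor\big|\leq J$ with $J=N^{1/20+\eps}$. Since $c_k(\lambda)=c_{k-\lfloor\lambda\rfloor}(\{\lambda\})$, the tail is the tail of the Fourier series of the periodization of $t\mapsto\e{\{hn/m\}t}\mathbf 1_{[0,1)}(t)$, a function that is $C^1$ away from a single jump and whose variation is $O(1)$; so the tail is $\ll\min\!\big(1,(J\|\overline m/n\|)^{-1}\big)$. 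A dyadic splitting together with the divisor bound $\#\{n\asymp N_1:\ \overline m\bmod n\in[0,N_1/D)\cup(N_1-N_1/D,N_1)\}\ll(N_1/D)N^{\eps}$ then shows that, summed over $m,n\in\X_N$, this is $\ll|\X_N|N^{1+\eps}/J=o(|\X_N|^2)$ by the hypothesis $|\X_N|\gg N^{1-1/20+\eps}$.

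For the truncated sum I would dyadically decompose $m\asymp M$, $n\asymp N_1$, $|k|\asymp A$, discard the blocks with $\min(M,N_1)\leq|\X_N|(\log N)^{-C}$ (estimated trivially), and reduce the remaining blocks to Theorem~\ref{thm:boundTri}. Writing $c_k(hn/m)=\int_0^1\e{(hn/m)t}\,\e{-kt}\,dt$ and pulling the $t$-integral outside the $(m,n,k)$-sum, for fixed $t$ the factor $\e{-kt}$ is a unimodular coefficient in the new variable $k$, while $\e{(hn/m)t}$ is a phase perturbation of the shape permitted by Remark~\ref{ptrmk}; moreover the decay $c_k(hn/m)\ll(1+|k-hn/m|)^{-1}$ localizes the effective range of $k$ so that $A\asymp\max(1,|h|N_1/M)$. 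Applying Theorem~\ref{thm:boundTri} (with $\vartheta=1$ and $\nu_k=\e{-kt}\mathbf 1_{|k-\lfloor hn/m\rfloor|\le J}$) block by block, integrating over $t$, and summing over all blocks gives a bound that saves a power $N^{1/20}$ over the trivial estimate $|\X_N|^2$; since $\|\alpha\|^2,\|\beta\|^2\leq|\X_N|$ and $\|\nu\|$ is essentially bounded, this yields $S_h=o(|\X_N|^2)$ under the hypothesis.

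The hard part is this last step: the weight $c_k(hn/m)$ genuinely couples the three summation variables, and in the unbalanced ranges $M\ll N_1$ the frequency $hn/m$ is large, so that many $k$ contribute and the perturbation $\e{(hn/m)t}$ has large amplitude. Organizing these losses — for instance by splitting the $t$-integral dyadically so as to balance the amplitude of the perturbation against the $\ell^2$-size of the $k$-coefficients — and checking that the total over all dyadic blocks still improves on the trivial bound by exactly the factor $N^{1/20}$ furnished by Theorem~\ref{thm:boundTri} (the balanced range $M\asymp N_1$ being the source of the exponent $\tfrac7{10}+\tfrac14=\tfrac{19}{20}$) is the one delicate point of the argument.
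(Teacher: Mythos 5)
The paper's own "proof" is just a pointer: "proceed as in Shparlinski~\cite{Shp}, using \eqref{mrthm:boundTri} instead of \eqref{ewfc}," and the numerology in the corollary ($1/48\mapsto 1/20$, matching the savings of \eqref{ewfc} and \eqref{mrthm:boundTri} when $M\approx N$, $A=1$) makes clear that the intended argument is a \emph{direct} application: by Weyl's criterion one bounds $\sum_{m,n\in\X_N,(m,n)=1}\e{h\rho_{m,n}}$, and this is supposed to \emph{already} be a bilinear Kloosterman-fraction sum with $\alpha=\beta=\mathbf{1}_{\X_N}$ and $A=1$ after dyadic decomposition. For that to work one needs $\rho_{m,n}=\{\overline m/n\}$ (equivalently $a_0/n$, or $b_0/m$, or the same with the roles of $m,n$ swapped); note also that the quantity $a_0/m$ as literally written in the corollary need not lie in $[0,1]$ (e.g.\ $m=2$, $n=5$ gives $a_0=3$). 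So the statement almost certainly carries a typo (either $a_0/n$, or the equation should be $an-bm=1$), and the intended proof is a one-line reduction to Theorem~\ref{thm:boundTri}, not a Fourier-expansion argument.

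Taking the literal reading $\rho_{m,n}=a_0/m=\frac nm\{\overline m/n\}$ and introducing the Fourier expansion in $\{\overline m/n\}$ is therefore a genuinely different route, and unfortunately it does not close. The tail estimate is fine, but the main term has an extra summation variable $k$ over a window of length $J\asymp N^{1/20}$, and this is expensive: with $\nu_k=\e{-kt}\mathbf 1_{|k-\lfloor hn/m\rfloor|\le J}$ one has $\|\nu\|\asymp J^{1/2}$, and Theorem~\ref{thm:boundTri} with $M\asymp N_1\asymp N$, $A\asymp J$ gives $\ll\|\alpha\|\|\beta\|\|\nu\|(AMN)^{7/20}(M+N)^{1/4}\asymp|\X_N|J^{1/2}J^{7/20}N^{19/20}=|\X_N|N^{397/400}$, which is \emph{larger} than $|\X_N|^2$ for $|\X_N|\asymp N^{19/20+\eps}$; so the claimed saving $N^{1/20}$ is not recovered. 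Your assertion that ``$\|\nu\|$ is essentially bounded'' and that $A\asymp\max(1,|h|N_1/M)$ implicitly uses the decay $c_k(\lambda)\ll(1+|k-\lambda|)^{-1}$, but that decay is exactly what is destroyed when you pass to the integral representation $c_k(\lambda)=\int_0^1\e{\lambda t}\e{-kt}\,dt$ and make $\nu_k=\e{-kt}$ unimodular; moreover, the truncation $|k-\lfloor hn/m\rfloor|\le J$ depends on $(m,n)$, so $\nu_k$ is not a coefficient in $k$ alone and cannot be fed into Theorem~\ref{thm:boundTri} without further decoupling. Combined with the unresolved unbalanced-range perturbation that you already flag yourself, this is a real gap, not merely a delicate point.
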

Shparlinski obtained the same result with $\frac{1}{20}$ replaced by $\frac1{48}$. We skip the proof of this corollary as it can be obtained in a straightforward manner by proceeding as in~\cite{Shp}, using~\eqref{mrthm:boundTri} instead of~\eqref{ewfc}. 

As observed in~\cite{DFI97}, a variation of the arguments used to bound $\mathcal{B}(M,N,A)$ can be used to treat the twisted sum
\est{
\mathscr{A}(M,N,A):=\sumthree_{\substack{a\in\A, m\in\M ,n\in\Nb ,\\(m,n)=(2,mn)=1}}\alpha_{m}\beta_n\nu_a\pr{\frac mn}\e{\vartheta\frac{a\overline m}{n}},
}
where $\pr{\frac{\cdot}{\cdot}}$ is the Jacobi symbol. We thus conclude the introduction with the analogue of Theorem~\ref{thm:boundTri} for $\mathscr{A}(M,N,A)$.

\begin{theorem}\label{thtw}
Let $\vartheta\neq0$. Then
\es{\label{acaavcf}
 \mathscr{A}(M,N,A)&\ll \|\alpha\|\|\beta\|\|\nu\|  \Big(1+ \frac{|\vartheta|A}{NM}\Big)^{\frac12} \\
 &\quad\times\big( (MN)^{\frac3{10}}(AM+AN)^{\frac{7}{20}+\eps}+A^\frac12(N+M)^{\frac78+\eps}\big).
}
\end{theorem}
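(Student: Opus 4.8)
The plan is to follow the same strategy used to establish Theorem~\ref{thm:boundTri} for $\mathcal{B}(M,N,A)$, inserting the Jacobi symbol $\left(\frac mn\right)$ throughout and tracking how it interacts with each manipulation. First I would reduce to $n$ odd (already built into the condition $(2,mn)=1$) and open the coprimality condition $(m,n)=1$ by M\"obius inversion, at the cost of the usual $(mn)^\eps$-type factors. The analytic heart of the argument of~\cite{DFI97} is a completion/Poisson step in the variable $m$ (or $n$): one writes $\overline m/n$ in terms of additive characters modulo $n$, replaces the $m$-sum by a sum over frequencies, and is left with complete exponential sums in the residue variable. The essential new feature is that these complete sums now carry a quadratic twist, so instead of Ramanujan-type sums or plain Kloosterman sums one encounters \emph{Sali\'e sums} (Gauss-twisted Kloosterman sums). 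The key point is that Sali\'e sums satisfy square-root cancellation with an \emph{elementary} evaluation, $\sum_{x\bmod n}^{*}\left(\frac xn\right)\e{\frac{ux+v\overline x}{n}}\ll (u,v,n)^{1/2}n^{1/2}$ uniformly, so in fact the twisted case is no harder — and in places slightly cleaner — than the untwisted one; this is precisely the observation attributed to~\cite{DFI97} in the sentence preceding the statement.

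The main steps, in order: (i) set up the M\"obius expansion and dyadic/smooth partitioning so that we may assume $\alpha,\beta,\nu$ supported on the stated boxes; (ii) apply Cauchy--Schwarz in the pair $(a,n)$ (or $(a,m)$, chosen to balance the final exponents) to eliminate one of the arbitrary coefficient vectors, producing a diagonal main term of size $\|\nu\|^2\|\beta\|^2$ times a length, and an off-diagonal sum of shifted products $\e{\vartheta a(\overline{m_1}-\overline{m_2})/n}\left(\frac{m_1 m_2}{n}\right)$; (iii) detect the $n$-average by Poisson summation (here the factor $(1+|\vartheta|A/(MN))^{1/2}$ enters, exactly as in Remark~\ref{ptrmk}, since the relevant derivative bound on the phase is of size $|\vartheta|A/(MN)$ per logarithmic unit), reducing to complete character sums; (iv) evaluate those complete sums as Sali\'e sums and invoke their square-root bound with the gcd factor; (v) bound the resulting arithmetic sum over $m_1,m_2$ and the dual variable, separating the zero-frequency term (which recombines with the diagonal and, after a second application of the trivial bound, contributes the $A^{1/2}(N+M)^{7/8+\eps}$ piece) from the nonzero frequencies (which, after optimizing the Cauchy--Schwarz split, give the $(MN)^{3/10}(AM+AN)^{7/20+\eps}$ piece); (vi) collect terms and undo the M\"obius expansion.

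I expect the main obstacle to be purely bookkeeping rather than conceptual: carrying the Jacobi symbol $\left(\frac mn\right)$ through the M\"obius inversion over the common divisor $d\mid (m,n)$ requires care because $\left(\frac mn\right)$ is not multiplicative in the naive way when $d$ and $n/d$ share factors, and one must split according to the $2$-adic and odd parts and use quadratic reciprocity to move the symbol into a shape where the Poisson/Sali\'e machinery applies cleanly. A second, milder technical point is ensuring that the Sali\'e evaluation is valid uniformly in the modulus $n$ (including prime-power moduli and the interaction with the factor coming from M\"obius), which forces one to track ramified primes and to use the gcd $(u,v,n)^{1/2}$ version of the bound rather than the clean prime-modulus formula. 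Once these are handled, the optimization of the two Cauchy--Schwarz choices and of the dyadic ranges proceeds exactly as in the proof of Theorem~\ref{thm:boundTri}, yielding the two terms in~\eqref{acaavcf}.
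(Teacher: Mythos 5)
Your outline misidentifies the engine of the paper's proof. You describe the argument as ``Cauchy--Schwarz in $(a,n)$, then completion/Poisson in $m$, then Sali\'e-sum estimates,'' attributing this to~\cite{DFI97}, but neither~\cite{DFI97} nor the present paper uses Poisson summation or Sali\'e evaluations as the core step. The actual mechanism (for both Theorem~\ref{thm:boundTri} and Theorem~\ref{thtw}) is: Cauchy--Schwarz over $m$ alone to remove $\alpha_m$; then \emph{amplification} by Dirichlet characters modulo $m$ with a primed amplifier $\sum_{\ell\in\LL}\chi(\ell)$ (this is what generates the auxiliary length $L$ and is essential to reaching the exponents $\frac{7}{20}$ and $\frac38$); then orthogonality to produce the congruence $\ell_1 n_1\equiv \ell_2 n_2\bmod m$; then a \emph{switch to the complementary divisor} $d$ (writing $md=\ell_1 n_1-\ell_2 n_2$), which eliminates $m$ entirely; then a second Cauchy--Schwarz and the elementary reciprocity law~\eqref{cond:3congruence} to arrive at an incomplete Kloosterman sum in $n_2'$ estimated by Lemma~\ref{ks}. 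None of this is Poisson summation, and without the amplifier $L$ the argument you sketch would stall at a much weaker bound; you never explain how the exponents $\frac{3}{10},\frac{7}{20},\frac78$ would emerge from a single Cauchy--Schwarz plus a square-root bound on complete sums, and indeed they would not.

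For Theorem~\ref{thtw} specifically, the paper's changes relative to Theorem~\ref{thm:boundTri} are modest but not the ones you name. One assumes $bN\geq M$; in the complementary-divisor step one splits $m$ into residue classes modulo $4b\q_1\p_2$ rather than $b\q_1\p_2$, which lets the periodicity of $\left(\tfrac{a}{b}\right)$ in both arguments be used; and then one carries the Jacobi symbol $\left(\tfrac{m}{n_1 n_2}\right)$ through the substitution $m\mapsto d$, factoring it into pieces depending only on the ``outer'' variables and a surviving character $\left(\tfrac{\tilde\ell_1}{n_2'}\right)$ that attaches to the $n_2'$-sum. The final estimate then invokes~\eqref{kseq2} (Weil's bound for an incomplete Kloosterman sum twisted by a Dirichlet character) in place of~\eqref{kseq}; there is no explicit Sali\'e evaluation. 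Your proposal also never addresses how to control the Jacobi symbol under the complementary-divisor change of variable, which is the only genuinely new technical content of this section, and you suggest M\"obius inversion on $(m,n)=1$, which the paper never performs (the coprimality condition is retained throughout and handled by the character orthogonality and the coprimality conditions on $c,d$).

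In short: the proposal does not reproduce the paper's method, it rests on a misreading of the underlying technique in~\cite{DFI97}, and as written it omits the amplification step that is indispensable to the stated exponents. If you wanted to salvage a Poisson-based route you would need to explain, quantitatively, what replaces the amplifier; as it stands the sketch cannot close.
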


\textbf{Acknowledgements}. 
We would like to thank S. Drappeau, A. Harper, D. Koukoulopoulos, X. Li and M. Radziwi\l\l{} for useful discussions and M.B. Milinovich and N. Ng for bringing the paper~\cite{DFI97} to our attention.

\section{Outline of the proof of Theorem~\ref{thm:boundTri}}\label{out}
Our proof has roughly the same structure of Duke, Friedlander and Iwaniec's proof of~\eqref{ewfc} and follows their clever application of the amplification method. However we introduce several refinements in their arguments, among which is particularly important the fact that we keep a longer diagonal when using the Cauchy-Schwartz inequality (a possibility mentioned in~\cite{DFI97}). This change, together with the extra average over $a$, introduces new subtle complications and requires a rather careful analysis. 

We now give an outline of the proof of Theorem~\ref{thm:boundTri}. The proof of Theorem~\ref{thtw} is very similar and the required changes are described in Section~\ref{wec}.

First, we notice that we can assume that $\beta_n$ is supported on integers coprime to $\vartheta$, as can be seen by pulling out the common factor between $n$ and $\vartheta$ and applying the Cauchy-Schwarz inequality.
Then, following~\cite{DFI97}, we 
apply the Cauchy-Schwarz inequality to the sum over $m$ and obtain that
\es{\label{bfc}
\mathcal{B}(M,N,A) \ll \|\alpha\|\mathcal{C}_1(M,N,A;\beta,\nu)^{\frac12},
}
where for any positive integer $b$,
\est{
\mathcal{C}_b(M,N,A;\beta,\nu):=\sum_{\substack{m\in\M,\\(m,b)=1}}\bigg|\sum_{\substack{a\in\A}}\sum_{\substack{n\in\Nb,\\(m,n)=1}}\beta_n\nu_a\e{\vartheta\frac{ a\overline m}{bn}}\bigg|^2.
}
At this point, we assume that $\beta_n$ is supported on square-free integers coprime to $b$ and that $(\vartheta,b)=1$. We will first give a bound for $\mathcal{C}_b$ in this case and in Section~\ref{rsq} we will use the freedom given by the parameter $b$ to obtain a bound for $\mathcal{C}_1$ valid in the general case.

We introduce an amplifier and consider the sum $\mathcal{D}_b(M,N,A,L;\beta,\nu)$ 
defined by
\est{
\mathcal{D}_b:=\!\!\!\sum_{\substack{m\in \M,\\(m,b)=1}}\frac1{\varphi(m)}\sum_{\chi\mod m}\hspace{-0.4em}\big|\!\sum_{\substack{\ell\in \LL,\\ (\ell,\vartheta b)=1}}\hspace{-0.4em}\chi(\ell)\big|^2 \bigg|\sumtwo_{\substack{n\in \Nb,a\in\A,\\(m,n)=1}}\chi(n)\beta_n\nu_a\e{\vartheta\frac{a\overline m}{bn}}\bigg|^2,
}
where $\LL:=\{\ell \ \textrm{is prime} \ | \ L < \ell < 2L\}$ and $L$ is a parameter to be chosen at the end of the argument. If $L> 2\log (b\vartheta M)$, then 
\est{
\sum_{\ell\in\LL,\ (\ell,\vartheta b)=1}\chi_0(\ell)\gg \frac{L}{\log L},
}
where $\chi_0$ is the principal character modulo $m$. Thus, 
\es{\label{amp}
\mathcal{C}_b&\ll \frac{M\log^2 L}{L^2}\hspace{-0.2em}\sum_{\substack{m\in\M,\\(m,b)=1}}\frac1{\varphi(m)}\bigg|\sum_{\substack{(\ell,\vartheta b)=1,\\\ell\in \LL}}\chi_0(\ell)\bigg|^2 \bigg|\hspace{-0.4em}\sum_{\substack{n\in \Nb,\\(m,n)=1}}\sum_{a\in \A}\beta_n\nu_a\e{\vartheta\frac{a\overline m}{bn}}\bigg|^2\\
&\ll  {ML^{-2+\eps}}\mathcal{D}_b(M,N,A,L; \beta,\nu),
}
provided that $L> 2\log (b\vartheta M)$. Thus, we have reduced the problem of bounding $\mathcal{C}_b$ to that of bounding the more flexible $\mathcal{D}_b$.  Squaring out and exploiting the orthogonality relation of character sums, we obtain
\as{ \label{def:DbEb}
\mathcal{D}_b(M,N,A;\beta,\nu)&=\sumseven_{\substack{m\in\M,n_1,n_2\in\Nb,a_1,a_2\in\A,\ell_1,\ell_2\in\LL,\\
(mb\vartheta,\ell_1\ell_2n_1n_2)=(m,b)=1,\\\ell_1n_1\equiv \ell_2n_2\mod m}}\beta_{n_1}\nu_{a_1}\overline {\beta_{n_2}\nu_{a_2}}\e{\vartheta\frac{a_1\overline m}{bn_1}-\vartheta\frac{a_2\overline m}{bn_2}} \notag \\
&= \mathscr{D}_b(M,N, A,L; \beta, \nu) + \mathscr{O}_b(M,N, A,L; \beta, \nu),
}
where $\mathscr{D}_b$ is the contribution to $\mathcal{D}_b$ from the ``diagonal terms'' $\ell_1n_1 = \ell_2n_2$, and $\mathscr{O}_b$ is the sum restricted to the ``off-diagonal'' terms $\ell_1n_1 \neq \ell_2n_2.$ 

We bound $\mathscr{D}_b$ in Section~\ref{sec:diagonal_ln_equald} by using Weil's bound (and thus the name ``diagonal terms'' is perhaps misleading in this case), treating it differently from~\cite{DFI97} where $\mathscr{D}_b$ is bounded trivially.

The treatment of $\mathscr{O}_b$ is performed in Section~\ref{sec:offdiagonal:initial} and the argument proceeds roughly this way: 
\begin{itemize}
\item In Section~\ref{gfdsa}, we switch to the complementary divisor $d$ of the congruence relation $\ell_1n_1\equiv \ell_2n_2\mod m$, eliminating the variable $m$. This also requires that we first split the sum over $m$ into certain congruence classes.
\item In Section~\ref{csi}, we apply the Cauchy-Schwarz inequality to the sums over $n_1,n_2,a_2$ but not to the sums over $d,a_1,\ell_1,\ell_2$. As a comparison, in~\cite{DFI97} the Cauchy-Schwarz inequality is applied to all the sums except those over $\ell_1$ and $\ell_2$.
\item In Section~\ref{mccs}, we apply the elementary reciprocity law~\eqref{cond:3congruence}, which roughly allows one to change $\frac{\overline \alpha}{\beta}$ into  $-\frac{\overline\beta }{\alpha}$ modulo 1, and so we arrive to an expression involving a sum of the form $\sum_{n_2}\operatorname{e}(\Delta\frac{ \overline {n_2}}{\ell_1\ell_1'n_1})$. We then use Weil's bound when $\Delta\neq0$ and a trivial estimation when $\Delta=0$.
\end{itemize}

When following the above steps, several complications arise when some of the variables are not pairwise coprime. Typically, we first deal with the case when the variables are coprime and then we remove these assumptions by using the bounds proved in the coprime case.

In Section~\ref{optl} we combine the bounds obtained for the diagonal and off-diagonal terms, and we optimize the parameter $L$. Finally, in Section~\ref{rsq} we remove the square-free condition on $\beta_n$, and in Section~\ref{rsq3} we deduce Theorem~\ref{thm:boundTri}.

As mentioned above, we conclude the paper by explaining, in Section~\ref{wec}, the modifications needed in the above arguments to prove Theorem~\ref{thtw} and, in Section~\ref{pc1}, we give a proof to Corollary~\ref{c1}.

\begin{rem*}
Throughout the paper, we use the common convention in analytic number theory that $\eps$ denotes an arbitrarily small positive quantity that may vary from line to line.
\end{rem*}

\section{The diagonal terms} \label{sec:diagonal_ln_equald}
In this section we bound the diagonal terms $\mathscr{D}_b:=\mathscr{D}_b(M,N,A,L;\beta,\nu)$. As mentioned in Section~\ref{out}, we assume that $(\vartheta,b)=1$ and that $\beta_n$ is supported on square-free integers which are coprime to $b$. For convenience of notation, we will also assume that $A,b,\vartheta,N \ll M^{C}$ for some constant $C>0$. We will remove these assumptions at the end of the argument.

By symmetry and the inequality $2|ab|\leq a^2+b^2$ , we see that the diagonal terms are bounded by
\as{
\mathscr{D}_b&:= \sumseven_{\substack{m\in\M,\ell_1, \ell_2\in\LL, n_1, n_2\in\Nb,a_1,a_2\in\A \\
(mb\vartheta,\ell_1\ell_2n_1n_2)=(m,b)=1,\ \ell_1n_1= \ell_2n_2}}\nu_{a_1}\overline{\nu_{a_2}}\beta_{n_1}\overline \beta_{n_2}\e{\vartheta \frac{a_1\overline m}{bn_1}-\vartheta \frac{a_2\overline m}{bn_2}}\notag\\
&\leq \sumsix_{\substack{\ell_1, \ell_2\in\LL, n_1, n_2\in\Nb,a_1,a_2\in\A \\ (b\vartheta,\ell_1\ell_2n_1n_2)=1,\ \ell_1n_1= \ell_2n_2}} (|\beta_{n_1}\nu_{a_1}|^2+|\beta_{n_2}\nu_{a_2}|^2 ) \times \notag
\\[-0.5em]
& \hskip 13em \times \bigg|\sum_{\substack{m\in\M,\\(m,b\ell_1\ell_2n_1n_2) = 1}}\e{\vartheta\frac{a_1\ell_1\overline m}{b\ell_1n_1}-\vartheta\frac{a_2\ell_2\overline m}{b\ell_2n_2}} \bigg|  \notag\\
&\ll \sumsix_{\substack{\ell_1, \ell_2\in\LL, n_1, n_2\in\Nb,a_1,a_2\in\A \\ (b\vartheta,\ell_1\ell_2n_1n_2)=1,\ \ell_1n_1= \ell_2n_2}} |\beta_{n_1}\nu_{a_1}|^2\bigg|\sum_{\substack{m\in\M,\\(m,b\ell_1\ell_2n_1n_2) = 1}}\e{\vartheta\frac{(a_1\ell_1 - a_2\ell_2)\overline m}{b\ell_1n_1}} \bigg|.\notag 
 }
For the terms satisfying $a_1\ell_1 \neq a_2\ell_2$ we use the version of Weil's bound given in Lemma \ref{ks}, in the appendix. We obtain
\est{
\sum_{\substack{m\in\M,\\(m,b\ell_1\ell_2n_1n_2) = 1}}\e{\vartheta\frac{(a_1\ell_1 - a_2\ell_2)\overline m}{b\ell_1n_1}}\ll (bLN)^{\frac 12+\eps} + (a_1\ell_1-a_2\ell_2,bn_1\ell_1) \frac{M^{1+\eps}}{bLN},
}
since $(\vartheta,b\ell_1n_2)=1$.
It follows that the contribution to $\mathscr{D}_b$ coming from these terms is bounded by  
\est{
&\ll \sumfive_{\substack{\ell_1, \ell_2\in\LL, n_1\in\Nb,a_1,a_2\in\A, \ \  \ell_2|\ell_1n_1}} |\beta_{n_1}\nu_{a_1}|^2(bLN)^{\frac 12+\eps}+{}\\
&\quad+  \sumthree_{\substack{ n_1\in\Nb,a_1,a_2\in\A }}|\beta_{n_1}\nu_{a_1}|^2\frac{M^{1+\eps}}{bLN}  \sumtwo_{\substack{\ell_1, \ell_2 \\ a_1\ell_1\neq a_2\ell_2,\, \ell_2|\ell_1n_1} }(a_1\ell_1-a_2\ell_2,bn_1)(a_2\ell_2, \ell_1) \\
&\ll \|\beta\|^2\|\nu\|^2 ALM^{\eps}\Big((bLN)^{\frac 12}+\frac{M}{bN}\Big),
}
since
\es{\label{adg}
\sumthree_{\substack{\ell_1, \ell_2, a_2,\\   a_1\ell_1\neq a_2\ell_2,\  \ell_2|\ell_1n_1}}\hspace{-0.4em}(a_1\ell_1-a_2\ell_2,bn) &\ll  \sum_{ \substack{|c|\leq 4AL,\\c\neq0}}\hspace{-0.2em}(c,bn) \sumthree_{\substack{\ell_1, \ell_2, a_2,\\   a_1\ell_1- a_2\ell_2=c,\\  \ell_2|\ell_1n_1}} (a_2\ell_2, \ell_1)\\
&\ll  AL^2M^\eps.
}
The contribution to $\mathscr{D}_b$ coming from the terms with $a_1\ell_1=a_2\ell_2$ is trivially $O(\|\beta\|^2 \|\nu\|^2 L^{1 + \eps} M)$, and thus
\es{ \label{eqn:diagonal_ellnequal}
\mathscr{D}_b(M,N,A,L;\beta,\nu) &\ll \|\beta\|^2\|\nu\|^2 L\Big(A(bLN)^{\frac 12}+\frac{AM}{bN}+M\Big)M^{\eps}.
}

\section{The off-diagonal terms} \label{sec:offdiagonal:initial}

In this section we bound the off-diagonal terms $\mathscr{O}_b(M,N,A,L;\beta,\nu)$. Again, we have the same assumptions for $\beta_n,\vartheta, b, A, N$ as in Section \ref{sec:diagonal_ln_equald}.

We start by dividing $ \mathscr{O}_{b} $ according to whether $(\ell_1,\ell_2)=1$ or not:  
\es{\label{gyui}
 \mathscr{O}_{b}(M,N,A,L; \beta,\nu) & =\mathscr{E}_{b,1}(M,N,A,L; \beta,\nu) +\mathscr{E}_{b,1}^*(M,N,A,L; \beta,\nu),
}
where, for any $\eta$ satisfying $(\eta,b)=1$ and $(\eta,\ell)=1$ for all $\ell\in\LL$, we define
\es{\label{fafe}
\mathscr{E}_{b,\eta} :=\sumseven_{\substack{\ell_1,\ell_2\in\LL,m\in\M,n_1 , n_2\in \Nb,a_1,a_2\in\A \\(mb\vartheta,\ell_1\ell_2n_1n_2)=(m,b)=(\ell_1,\ell_2)=1,\ \eta|m,\\\ell_1n_1\equiv \ell_2n_2\mod m, \  \ell_1 n_1\neq \ell_2n_2}}\hspace{-0.3em}\beta_{n_1}\nu_{a_1}\overline {\beta_{n_2}\nu_{a_2}}\e{\vartheta\frac{a_1\overline m}{bn_1}-\vartheta\frac{a_2\overline m}{bn_2}},
}
and $\mathscr{E}^*_{b,\eta}(M,N,A,L; \beta,\nu)$ is the same sum with $(\ell_1,\ell_2)=1$ replaced by $(\ell_1,\ell_2)>1$. We notice that 
\es{\label{tqv}
\mathscr{E}^*_{b,1}
 =\sum_{1<\ell\in\LL}\pr{\mathscr{E}_{b,1}(M,N,A,\{1\}; \beta,\nu)-\mathscr{E}_{b,\ell}(M,N,A,\{1\}; \beta,\nu)},
}
where we extended the definition of $\mathscr{E}_{b,\eta}$ to the case where $\LL=\{1\}$. Thus it suffices to bound $\mathscr{E}_{b,\eta}$.

We introduce some notation:
\es{\label{vae1}
n_1'&:=\frac{n_1}{(n_1,\ell_1\ell_2)}=\frac{n_1}{\p_1\p_2},\quad  \p_2=(\ell_2,n_1),\quad \p_1=(\ell_1,n_1),\\
n_2'&:=\frac{n_1}{(n_2,\ell_1\ell_2)}=\frac{n_2}{\q_1\q_2},\quad \q_1=(\ell_1,n_2),\quad \q_2=(\ell_2,n_2),\\
}
and notice that, for square-free integers $n_1,n_2$, this automatically gives $(\ell_1\ell_2,n_1'n_2')=1$.
We also divide $\mathscr{E}_{b,\eta}(M,N,A,L; \beta,\nu)$ further into
\es{\label{dft}
\mathscr{E}_{b,\eta}=\mathscr{S}_{b,\eta}+\mathscr{S}^*_{b,\eta},
}
where $\mathscr{S}_{b,\eta}$ and $\mathscr{S}^*_{b,\eta}$ are obtained by restricting the sums to $(n_1',n_2')=1$ and $(n_1',n_2')>1$ respectively. 

\subsection{The terms with $(n_1',n_2')=1$}

In this section we consider the sum
\est{
&\mathscr{S}_{b,\eta} :=\sumseven_{\substack{\ell_1, \ell_2\in\LL, n_1 , n_2\in \Nb,m\in\M ,a_1,a_2\in\A\\(mb\vartheta,\ell_1\ell_2n_1n_2)=(m,b)=(\ell_1,\ell_2)=(n_1',n_2')=1\\\ell_1n_1\equiv \ell_2n_2\mod m,\  (\ell_1\ell_2,n_1'n_2')=1,\\ \ell_1 n_1\neq \ell_2n_2,\ \eta|m}}\hspace{-0.3em} \beta_{n_1}\nu_{a_1}\overline {\beta_{n_2}\nu_{a_2}}\e{\vartheta\frac{a_1\overline m}{bn_1}-\vartheta\frac{a_2\overline m}{bn_2}}.
}

\subsubsection{Introducing the complementary divisor}\label{gfdsa}

We wish to switch to the complementary divisor of the congruence condition $\ell_1n_1\equiv \ell_2n_2\mod m$ (with $\ell_1n_1 \neq \ell_2 n_2$), so we write this as
\est{
md_0 = 
\ell_1\p_1\p_2n_1' - \ell_2\q_1\q_2n_2',\qquad d_0\in\Z_{\neq0}
}
with $\p_i,\q_i$ as in~\eqref{vae1}. We simplify the common factors and rewrite this equality as
\es{\label{defreduced*2}
md = \tilde \ell_1 \p_1 n_1' -\tilde \ell_2  \q_2n_2'
}
for $d:={d_0}/{\q_1\p_2}$ and
\es{\label{vae2}
 \tilde \ell_1:=\frac{\ell_1}{\q_1}, \quad \tilde \ell_2:=\frac{\ell_2}{\p_2 }.
}
We notice that the condition $(m,b\ell_1\ell_2n_1n_2)=1$ can be factored into $(m,b\q_1\p_2)\linebreak[0]=1$ and $(m,\tilde \ell_1   \tilde \ell_2  \p_1 \q_2n_1'n_2')=1$. Moreover, for $(\ell_1n_1',\ell_2n_2')=1$,
the conditions $(m,\tilde \ell_1   \tilde \ell_2  \p_1 \q_2n_1'n_2' )=1$, $\eta|m$ and~\eqref{defreduced*2} can be expressed in the equivalent form
\ast{
&(d,\tilde \ell_1   \tilde \ell_2  \p_1 \q_2n_1'n_2')=1,&&
\tilde \ell_1 \p_1 n_1' \equiv \tilde \ell_2  \q_2n_2'\mod {|d|\eta},\\
&\q_1\p_2|d|\leq D:=3\frac{NL}M,&& n_2'\in \I,
}
for a certain interval $\I:=\I(\ell_1, \ell_2 , \p_1,\p_2,\q_1,\q_2,d,n_2')$. Thus, to eliminate the variable $m$, it remains to express the condition $(m,b\q_1\p_2)=1$ and the argument of the exponential in terms of the remaining variables.

We do this by dividing the sum over $m$ according to the residue classes $m\equiv c\mod{b\q_1\p_2}$ for $c\in(\Z/b\q_1\p_2\Z)^*$. Thus, using also~\eqref{defreduced*2}, we obtain that $m$ satisfies the following congruence conditions
\est{
m\equiv \overline d \tilde \ell_1 \p_1 n_1' \mod{  \q_2n_2'},\hspace{0.5em}
m\equiv  - \overline d \tilde \ell_2  \q_2n_2'\mod { \p_1 n_1'},\hspace{0.5em} m\equiv c \mod {b\q_1\p_2}
}
and the argument of the exponential function 
\est{
\vartheta\pr{\frac{a_1\overline m}{bn_1}-\frac{a_2\overline m}{bn_2}}=\vartheta\pr{\frac{a_1\overline m}{b\p_1\p_2n_1'}-\frac{a_2\overline m}{b\q_1\q_2n_2'}}
}
is congruent modulo 1 to
\as{
&\vartheta\prbigg{-d\frac{a_1\overline {b\p_2 \tilde \ell_2  \q_2n_2'}}{\p_1n_1'}-d\frac{a_2\overline {\tilde \ell_1 \p_1 n_1'  b\q_1}}{\q_2n_2'}+\frac{a_1\overline {c\p_1n_1'}}{b\p_2}-\frac{a_2\overline {c\q_2n_2'}}{b\q_1}}\notag\\
&\hspace{2em}\equiv\vartheta\prbigg{\overline c \prbigg{\frac{a_1\overline {\p_1n_1'}}{b\p_2}-\frac{a_2\overline {\q_2n_2'}}{b\q_1}}-d\prbigg{\frac{a_1\overline {b\p_2 \tilde \ell_2  \q_2n_2'}}{\p_1n_1'}+\frac{a_2\overline {\tilde \ell_1 \p_1 n_1'  b\q_1}}{\q_2n_2'}}},\label{vae}
}
since for $(\beta,\gamma)=(\alpha,\beta\gamma)=1$, we have $\frac{ \overline{\alpha}}{\beta \gamma}\equiv \frac{\overline{ \alpha\beta}}{\gamma}+\frac{ \overline {\alpha\gamma}}{\beta}\mod1.$ Finally, the conditions $m\equiv c\mod{b\q_1\p_2} $ and $\tilde \ell_1 \p_1 n_1' \equiv\tilde \ell_2  \q_2n_2' \mod {|d|\eta}$ can be combined into the equivalent 
\est{
\tilde \ell_1 \p_1 n_1'\equiv \tilde \ell_2  \q_2n_2'-c\eta d\mod {b\q_1\p_2|d|\eta}.
}
Thus,
\est{
\mathscr{S}_{b,\eta} &=\sumseven_{\substack{\ell_1, \ell_2\in\LL, n_1 , n_2\in \Nb, a_1,a_2\in \A, d\leq {D}/{\q_1\p_2}\\(b\vartheta,\ell_1\ell_2n_1n_2)=(\ell_1,\ell_2)=(n_1',n_2')=1,\\
(d,\tilde \ell_1   \tilde \ell_2  \p_1 \q_2n_1'n_2')=(n_1'n_2',\ell_1\ell_2)=1,\  n_2'\in \I,\\ \tilde \ell_1 \p_1 n_1' \neq \tilde \ell_2  \q_2n_2'}}
\sumstar_{\substack{c\mod{b\q_1\p_2},\\\tilde \ell_1 \p_1 n_1' \equiv \tilde \ell_2  \q_2n_2'-cd\eta\\\hspace{1.5em}\mod {b\q_1\p_2|d|\eta}  }}\beta_{n_1}\nu_{a_1}\overline {\beta_{n_2}\nu_{a_2}}\e{\cdots},\\
}
where the argument of the exponential is given by~\eqref{vae} and $n_i',\p_i,\q_i,\tilde \ell_i$ are defined in~\eqref{vae1} and~\eqref{vae2}. Now, we treat $\p_1,\p_2,\q_1,\q_2,n_1',n_2'$ as variables and, after switching the order of summation, we have
\est{
\mathscr{S}_{b,\eta} &=
 \sumfour_{\substack{\p_1,\p_2,\q_1,\q_2\in\LL\cup\{1\},\\(b\vartheta\p_1\q_1,\p_2\q_2)=1,\\(b\vartheta,\p_1q_1)=1
 }}
 \sumtwo_{\substack{ \p_1\p_2n_1',\q_1\q_2n_2'\in \Nb,\\(b\vartheta,n_1'n_2')=1,\\(n_1',n_2')=1}}
 \sumtwo_{\substack{\ell_1,\ell_2\in\LL,\\ \p_1,\q_1|\ell_1,\, \p_2,\q_2|\ell_2,\\(bn_1'n_2'\vartheta,\ell_1\ell_2)=(\ell_1,\ell_2)=1,\\ \tilde \ell_1 \p_1 n_1' \neq \tilde \ell_2  \q_2n_2'}}
 \sum_{\substack{0\neq |d|\leq D/\q_1\p_2,\\(d,\tilde \ell_1   \tilde \ell_2  \p_1 \q_2n_1'n_2')=1\\ \ n_2'\in \I}}\\
& \sumstar_{\substack{c\mod{b\q_1\p_2},\\\tilde \ell_1 \p_1 n_1' \equiv \tilde \ell_2  \q_2n_2'-cd\eta\mod {b\q_1\p_2|d|\eta}  }}
\sumtwo_{a_1, a_2\in \A}\beta_{\p_1\p_2n_1'}\nu_{a_1}\overline {\beta_{\q_1\q_2n_2'}\nu_{a_2}}\e{\cdots},\\
}
with the argument of the exponential still given by~\eqref{vae}.

\subsubsection{Applying the Cauchy-Schwarz inequality}\label{csi}
Next, we apply the Cauchy-Schwarz inequality with respect to the sums over $\p_1,\p_2,\q_1,\q_2,n_1',n_2',c,a_2$. After squaring out, we get
\es{\label{aaebfr}
&\mathscr{S}_{b,\eta} ^2 \ll M^\eps\|\beta\|^4\|\nu\|^2 \sumfour_{\substack{\p_1,\p_2,\q_1,\q_2\in\LL\cup\{1\},\\\p_1\neq\q_1 \Rightarrow 1\in\{\p_1,\q_1\},\\ \p_2\neq\q_2 \Rightarrow 1\in\{\p_2,\q_2\}}}b\q_1\p_2\mathscr{T}_{b,\eta},
}
where $\mathscr{T}_{b,\eta}$ is defined as 
\es{\label{aae}
\mathscr{T}_{b,\eta}&:=
 \sumtwo_{\substack{\p_1\p_2n_1', \q_1\q_2n_2'\in \Nb\\(b,n_1'n_2')=(n_1',\vartheta n_2')=1,\\ \mu^2(n_1')=1}}
 \sumfour_{\substack{\ell_1,\ell_2,\ell_1',\ell_2'\in\LL,\\ \p_1,\q_1|(\ell_1,\ell_1'),\ \p_2,\q_2|(\ell_2,\ell_2')\\(b\vartheta n_1'n_2',\ell_1\ell_2\ell_1'\ell_2')=(\ell_1,\ell_2)=(\ell_1',\ell_2')=1,\\ \tilde \ell_1 \p_1 n_1' \neq \tilde \ell_2  \q_2n_2',\ \tilde \ell_1' \p_1 n_1' \neq \tilde \ell_2'  \q_2n_2'}}
 \sumtwo_{\substack{0\neq |d|,|d'|\leq D/\q_1\p_2,\\(d,\tilde \ell_1   \tilde \ell_2  \p_1 \q_2n_1'n_2')=1\\ (d',\tilde \ell_1'   \tilde \ell_2'  \p_1 \q_2n_1'n_2')=1\\ \ n_2'\in \I\cap\I'}}\\
 &\quad  \sumstar_{\substack{c\mod{b\q_1\p_2},\\\tilde \ell_1 \p_1 n_1' \equiv \tilde \ell_2  \q_2n_2'-cd\eta\mod {b\q_1\p_2|d|\eta},\\ \tilde \ell_1' \p_1 n_1' \equiv \tilde \ell_2'  \q_2n_2'-cd'\eta\mod {b\q_1\p_2|d'|\eta}  }}
\sumthree_{a_1,a_1', a_2\in \A}\nu_{a_1}\overline {\nu_{a_1'}}\e{\cdots},\\
}
with $\I':=\I(\ell_1', \ell_2' , \p_1,\p_2,\q_1,\q_2,d',n_2')$, $ \tilde \ell_1':=\ell_1'/{\q_1}, \tilde \ell_2':=\ell_2'/{\p_2 }$ and where we introduced the condition $\mu^2(n_1')=1$, which was implicit in the previous formulae, and we dropped the condition $(n_2',\vartheta)=1$. 
The argument of the exponential is now
\es{\label{vha}
\vartheta\prbigg{\overline c (a_1-a_1')\frac{\overline {\p_1n_1'}}{b\p_2}-\frac{(da_1{\tilde \ell_2'}-d'a_1'{\tilde \ell_2})\overline {\tilde \ell_2\tilde \ell_2'b\p_2   \q_2n_2'}}{\p_1n_1'}-\frac{a_2(d \tilde{ \ell_1'}-d' \tilde{ \ell_1})\overline{ \tilde \ell_1\tilde \ell_1'\p_1 n_1'  b\q_1}}{\q_2n_2'}}.
}
We divide the right hand side of~\eqref{aae} into two parts:
\es{\label{eesbf}
&\mathscr{T}_{b,\eta} = \mathscr{U}_{b,\eta}+\mathscr{U}^*_{b,\eta},
}
where $\mathscr{U}_{b,\eta}$ is the contribution of the terms with $(\ell_1\ell_1',\ell_2\ell_2')=1$. We will bound $\mathscr{U}_{b,\eta}$ in Section~\ref{3f}, and in Section~\ref{4f} we will explain the modifications needed to bound $\mathscr{U}^*_{b,\eta}$.

\subsubsection{Bounding $\mathscr{U}_{b,\eta}$: the case $(\ell_1\ell_1',\ell_2\ell_2')=1$}\label{3f}
We start by dividing $\mathscr{U}_{b,\eta}(M,N,A,\LL,\nu)$ into
\es{\label{stas}
\mathscr{U}_{b,\eta}=\mathscr{V}_{b,\eta}+\mathscr{V}^*_{b,\eta},
}
where $\mathscr{V}^*_{b,\eta}$ is the contribution of the terms such that  $d=d',\ell_1=\ell_1',\ell_2=\ell_2'$ and $a_1\neq a_1'$.

\paragraph{Bounding $\mathscr{V}^*_{b,\eta}$: the case $d=d',\ell_1=\ell_1',\ell_2=\ell_2',a_1\neq a_1'$}
In this section we deal with $\mathscr{V}^*_{b,\eta}$, which is given by
\est{
&\mathscr{V}^*_{b,\eta}:=
 \sumtwo_{\substack{p_1\p_2n_1', \q_1\q_2n_2'\in\Nb\\(b,n_1'n_2')=(n_1',\vartheta n_2')=1,\\ \mu^2(n_1')=1}}
 \sumtwo_{\substack{\ell_1,\ell_2\in\LL,\\ \p_1,\q_1|\ell_1,\ \p_2,\q_2|\ell_2\\(b\vartheta n_1'n_2',\ell_1\ell_2)=(\ell_1,\ell_2)=1,\\ \tilde \ell_1 \p_1 n_1' \neq \tilde \ell_2  \q_2n_2'}}
 \sum_{\substack{0\neq |d|\leq D/\q_1\p_2,\\(d,\tilde \ell_1   \tilde \ell_2  \p_1 \q_2n_1'n_2')=1,\\ n_2'\in \I}}
  \sumstar_{\substack{c\mod{b\q_1\p_2},\\\tilde \ell_1 \p_1 n_1' \equiv \tilde \ell_2  \q_2n_2'-cd\eta\\ \hspace{1.4em}\mod {b\q_1\p_2|d|\eta}  }}\\
 &\qquad\quad 
\sumthree_{\substack{a_1,a_1', a_2\in \A,\\ a_1\neq a_1'}}\nu_{a_1}\overline {\nu_{a_1'}}\e{\vartheta\prbigg{\overline c (a_1-a_1')\frac{\overline {\p_1n_1'}}{b\p_2}-\frac{d(a_1-a_1')\overline {\tilde \ell_2b\p_2   \q_2n_2'}}{\p_1n_1'}}}.\\
}
We reintroduce the complementary divisor in the congruence condition $\tilde \ell_1 \p_1 n_1' \linebreak[0]\equiv \tilde \ell_2  \q_2n_2'-cd\eta\mod {b\q_1\p_2|d|\eta}$ and reverse the previous computations. We then arrive at
\est{
\mathscr{V}^*_{b,\eta}&= \hspace{-0.87em}
 \sumtwo_{\substack{\p_1\p_2n_1',\q_1\q_2n_2'\in\Nb\\(b,n_1'n_2')=1,\\ (n_1',\vartheta n_2')=1,\\\mu^2(n_1')=1}}
\hspace{-0.51em} \sumtwo_{\substack{\ell_1,\ell_2\in\LL,\\ \p_1,\q_1|\ell_1,\ \p_2,\q_2|\ell_2\\(b\vartheta n_1'n_2',\ell_1\ell_2)=(\ell_1,\ell_2)=1,\\ \tilde \ell_1 \p_1 n_1' \neq \tilde \ell_2  \q_2n_2'}}
 \sumfour_{\substack{m\in\M,\,a_1,a_1',a_2\in\A,\ \eta|m\\(m, b\ell_1    \ell_2  n_1'n_2')=1,\ a_1\neq a_1'\\ \tilde \ell_1 \p_1 n_1' \equiv \tilde \ell_2  \q_2n_2'\mod m,}}
\hspace{-0.5em} \nu_{a_1}\overline {\nu_{a_1'}}\e{\vartheta\frac{(a_1-a_1')\overline {m}}{b\p_1\p_2n_1'}}.\\
}
Now, we introduce once again the complementary divisor, but this time we get rid of the variable $n_2'$: 
\est{
\mathscr{V}^*_{b,\eta}&=
 \sum_{\substack{\p_1\p_2n_1'\in \Nb,\\(b\vartheta,n_1')=1,\\\mu(n_1')^2=1}}
 \sumtwo_{\substack{\ell_1,\ell_2\in\LL,\\ \p_1,\q_1|\ell_1,\ \p_2,\q_2|\ell_2\\(b\vartheta n_1',\ell_1\ell_2)=(\ell_1,\ell_2)=1}}
\sum_{\substack{0\neq |d|\leq D/\q_1\p_2,\\(d,\tilde \ell_1   \tilde \ell_2  \p_1 \q_2n_1')=1}} 
\sumthree_{\substack{a_1,a_1', a_2\in \A,\\ a_1\neq a_1'}}\nu_{a_1}\overline {\nu_{a_1'}}
F(\cdots),\\
}
where
\est{
F(\cdots):= \sum_{\substack{m\in\M \cap\J,\ (m, b\ell_1    \ell_2  n_1')=1,\\ m\equiv\overline d \tilde \ell_1  \p_1n_1'\  (\tn{mod }\tilde \ell_2  \q_2) ,\  m\equiv0\mod\eta,\\ (md-\tilde \ell_1 \p_1 n_1' ,b\q_1 \ell_2 \q_2)=\tilde \ell_2 \q_2}}\e{\vartheta\frac{(a_1-a_1')\overline {m}}{b\p_1\p_2n_1'}},\\
}
for some interval $\J=\J(\ell_1, \ell_2 , \p_1,\p_2,\q_1,\q_2,d,n_2')$.
Notice that, in order not to lose the condition $(n_2',b\q_1\p_2)=1$, we have to introduce the condition $(md-\tilde \ell_1 \p_1 n_1' ,b\q_1\ell_2 \q_2)=\tilde \ell_2 \q_2$. 
 We apply~\eqref{kseq2} with $\beta=0$, $\chi$ the trivial character and
\est{
&k:=\eta\tilde{\ell_2}\q_2,\quad \gamma:=b\p_1\p_2n_1',\quad \gamma_1:=\frac{\gamma}{h_1}=\frac{b\p_1\p_2n_1'}{(  \q_2,\p_2)(\eta,n_1')},\quad c=b\q_1\ell_2 \q_2 \\
&h:=(k,\gamma)=(\eta,n_1')(\tilde \ell_2  \q_2,\p_2)=(\eta,n_1')(\q_2,\p_2),\qquad h_1:=(k^\infty,\gamma)=h,\\
}
where we used that $(\eta,b\ell_1\ell_2)=(n_1',\ell_2)=1$ and that $\gamma/b$ is square-free.
It follows that 
\est{
F(\cdots)
&\ll  (bN)^{\frac12+\eps}+(a_1-a_1',bn_1'\p_1\p_2)^{\frac12}\frac{M}{ \tilde{\ell_2} (b \p_1\p_2\q_2 n_1')^\frac12},
}
since $(\vartheta,bn_1'\ell_1\ell_2)=1$.
Dealing with the GCD as in~\eqref{adg}, we obtain
\as{
\mathscr{V}^*_{b,\eta} &\ll \|\nu\|^2  \frac{A^{2}DM^\eps}{\q_1\p_2}
 \sum_{\substack{\p_1\p_2n_1'\in \Nb}}
 \sumtwo_{\substack{\ell_1,\ell_2\in\LL,\\ \p_1,\q_1|\ell_1,\ \p_2,\q_2|\ell_2}}\pr{ (bN)^{\frac12}+\frac{M}{\tilde{\ell_2} (b \p_1\p_2\q_2 n_1')^\frac12}}\notag
\\
&\ll \|\nu\|^2  \frac{A^{2}DM^\eps}{\q_1\p_2}
 \sum_{\substack{\p_1\p_2n_1'\in \Nb}}
 \frac{L^2}{(\p_1+\q_1)(\p_2+\q_2)}\pr{ (bN)^{\frac12}+\frac{M}{ \tilde{\ell_2} (b \q_2 N)^\frac12 }}\notag
\\
&\ll  \frac{\|\nu\|^2A^{2}DL^2NM^\eps\pr{ \frac{(bN)^{\frac12}}{\p_1\p_2}+\frac{M}{ \p_1 L(b\q_2 N)^\frac12}}}{\q_1\p_2(\p_1+\q_1)(\p_2+\q_2)}\ll \frac{\|\nu\|^2b^{\frac12}L^2N^{\frac32}M^\eps DA^{2}}{\q_1\p_2(\p_1+\q_1)(\p_2+\q_2)},\label{1bd}
}
where we could assume without loss of generality that $1\leq D = 3\frac{LN}{ M}$, since otherwise the sum over $d$ in the definition of $\mathscr{V}^*_{b,\eta}$ is empty.

\paragraph{Bounding $\mathscr{V}_{b,\eta}$ : the case  $(d,\ell_1,\ell_2)\neq(d',\ell_1',\ell_2')$ if $a_1\neq a_1'$}\label{mccs}

Here we deal with $\mathscr{V}_{b,\eta}$, which consists of the terms of $\mathscr{U}_{b,\eta}$ such that $(d,\ell_1,\ell_2)\neq(d',\ell_1',\ell_2')$ if $a_1\neq a_1'$ (we remark that here $(\cdot,\cdot,\cdot)$ indicates a triple and not a GCD). Specifically,
\est{
&\mathscr{V}_{b,\eta}:=
 \sumtwo_{\substack{\p_1\p_2n_1', \q_1\q_2n_2'\in \Nb\\(b,n_1'n_2')=(n_1',\vartheta n_2')=1,\\ \mu^2(n_1')=1}}
 \sumfour_{\substack{\ell_1,\ell_2,\ell_1',\ell_2'\in\LL,\\ \p_1,\q_1|(\ell_1,\ell_1'),\ \p_2,\q_2|(\ell_2,\ell_2')\\(bn_1'n_2'\vartheta,\ell_1\ell_2\ell_1'\ell_2')=(\ell_1\ell_1',\ell_2\ell_2')=1,\\ \tilde \ell_1 \p_1 n_1' \neq \tilde \ell_2  \q_2n_2',\ \tilde \ell_1' \p_1 n_1' \neq \tilde \ell_2'  \q_2n_2'}}
 \sumtwo_{\substack{0\neq |d|,|d'|\leq D/\q_1\p_2,\\(d,\tilde \ell_1   \tilde \ell_2  \p_1 \q_2n_1'n_2')=1\\ (d',\tilde \ell_1'   \tilde \ell_2'  \p_1 \q_2n_1'n_2')=1\\ \ n_2'\in \I\cap\I'}}
\\
 & \hspace{2em} \sumstar_{\substack{c\mod{b\q_1\p_2},\\\tilde \ell_1 \p_1 n_1' \equiv \tilde \ell_2  \q_2n_2'-cd\eta\mod {b\q_1\p_2|d|\eta},\\ \tilde \ell_1' \p_1 n_1' \equiv \tilde \ell_2'  \q_2n_2'-cd'\eta\mod {b\q_1\p_2|d'|\eta}  }}
\sumthree_{\substack{a_1,a_1', a_2\in \A,\\ a_1\neq a_1'\Rightarrow (d,\ell_1,\ell_2)\neq(d',\ell_1',\ell_2')}}\nu_{a_1}\overline \nu_{a_1'}\e{\cdots},\\
}
with the argument of the exponential given by~\eqref{vha}.

We start the analysis of $\mathscr{V}_{b,\eta}$ by noticing that  the conditions
\est{
\tilde \ell_1 \p_1 n_1' -\tilde \ell_2  \q_2n_2'+cd\eta &\equiv 0\mod {b\q_1\p_2|d|\eta}\\
\tilde \ell_1' \p_1 n_1' -\tilde \ell_2'  \q_2n_2'+cd'\eta &\equiv 0\mod {b\q_1\p_2|d'|\eta}
}
imply the congruence conditions
\as{
(\tilde \ell_2' \tilde \ell_1-\tilde \ell_2\tilde \ell_1') \p_1 n_1'  +(\tilde \ell_2' d-\tilde \ell_2d')c\eta\equiv 0\mod {b\q_1\p_2\eta},\label{acafd}\\
(d'\tilde \ell_1 -d\tilde \ell_1') \p_1 n_1' \equiv (d'\tilde \ell_2 -d\tilde \ell_2' ) \q_2n_2' \mod {b}\notag
}
and thus also
\es{ \label{fja}
(d'\tilde \ell_1 -d\tilde \ell_1') \overline{\q_2n_2'} \equiv (d'\tilde \ell_2 -d\tilde \ell_2' ) \overline {\p_1n_1'} \mod {b},
}
since $(b,\p_1n_1'\q_2n_2')=1$.

Now, we use the congruence relation
\es{ \label{cond:3congruence}
\frac{\overline {\alpha\gamma}}{\beta}+\frac{\overline {\beta\gamma}}{\alpha}+\frac{\overline {\alpha\beta}}{\gamma}\equiv\frac{1}{\alpha\beta\gamma}\mod 1,
}
which holds for $\alpha,\beta,\gamma$ pairwise coprime, to rewrite $-\frac{a_2(d \tilde{ \ell_1'}-d' \tilde{ \ell_1})\overline{ \tilde \ell_1\tilde \ell_1'\p_1 n_1'  b\q_1}}{\q_2n_2'}$ (modulo 1) as 
\es{\label{cond:-congruence}
&\frac{a_2(d \tilde{ \ell_1'}-d' \tilde{ \ell_1})\overline{ \q_2 n_2'  b}}{\tilde \ell_1\tilde \ell_1'\p_1\q_1n_1'}+\frac{a_2(d \tilde{ \ell_1'}-d' \tilde{ \ell_1})\overline{ \tilde \ell_1\tilde \ell_1'\p_1 \q_1n_1'  \q_2n_2'}}{b}
-\frac{a_2(d \tilde{ \ell_1'}-d' \tilde{ \ell_1})}{b \tilde \ell_1\tilde \ell_1'\p_1 n_1'  \q_1\q_2n_2'}\\
&\equiv\frac{a_2(d \tilde{ \ell_1'}-d' \tilde{ \ell_1})\overline{ \q_2 n_2'  b}}{\tilde \ell_1\tilde \ell_1'\p_1\q_1n_1'}+\frac{a_2(d'\tilde \ell_2 -d\tilde \ell_2' )\overline{ \tilde \ell_1\tilde \ell_1'\p_1^2 \q_1n_1'  n_1'^2}}{b}
-\frac{a_2(d \tilde{ \ell_1'}-d' \tilde{ \ell_1})}{b \tilde \ell_1\tilde \ell_1'\p_1 n_1'  \q_1\q_2n_2'},
}
by~\eqref{fja}, 
and thus the argument~\eqref{vha} of the exponential becomes
\es{\label{fvw}
&
\vartheta \prbigg{\Delta\frac{ \overline {\tilde \ell_2\tilde \ell_2'b\p_2   \q_2n_2'}}{\tilde \ell_1\tilde \ell_1'\q_1 \p_1n_1'}
-\frac{a_2(d \tilde{ \ell_1'}-d' \tilde{ \ell_1})}{b \tilde \ell_1\tilde \ell_1'\p_1 n_1'  \q_1\q_2n_2'}+ (a_1-a_1')\frac{\overline {c\p_1n_1'}}{b\p_2}-\frac{a_2(d'\tilde \ell_2 -d\tilde \ell_2' )\overline{ \tilde \ell_1\tilde \ell_1'\p_1^2 \q_1n_1'^2  }}{b}},
}
where
\est{
\Delta:=a_2(d \tilde{ \ell_1'}-d' \tilde{ \ell_1})\tilde \ell_2\tilde \ell_2'\p_2-(da_1{\tilde \ell_2'}-d'a_1'{\tilde \ell_2})\tilde \ell_1\tilde \ell_1'\q_1.
}

We divide $\mathscr{V}_{b,\eta}$ into two parts, depending on whether $\Delta$ is equal to $0$ or not:
\es{\label{rbar}
\mathscr{V}_{b,\eta}=\mathscr{V}^{\Delta=0}_{b,\eta}+\mathscr{V}^{\Delta\neq 0}_{b,\eta}.
}
We shall give a trivial bound for the terms with $\Delta=0$, whereas we will use Weil's bound on the sum over $n_2'$ to handle the terms with $\Delta\neq0$.

\textbf{The terms with $\Delta=0$.}
The condition $\Delta=0$ gives that
\es{\label{bmr}
a_2(d \tilde{ \ell_1'}-d' \tilde{ \ell_1})\tilde \ell_2\tilde \ell_2'\p_2=(da_1{\tilde \ell_2'}-d'a_1'{\tilde \ell_2})\tilde \ell_1\tilde \ell_1'\q_1,
} 
and it allows us to express either $a_1$ or $a_1'$ in terms of all the other variables:
\est{
a_1 &= f(a_1',a_2,d,d',\ell_1,\ell_2, \ell_1', \ell_2',\p_1,\q_1,\p_2,\q_2),\\
a_1' &= g(a_1,a_2,d,d',\ell_1,\ell_2, \ell_1', \ell_2',\p_1,\q_1,\p_2,\q_2),
}
for some functions $f$ and $g$.
Moreover, since  $(\ell_1\ell_1', \ell_2\ell_2') =(d,\tilde \ell_1\tilde\ell_2)=(d',\tilde \ell_1'\tilde\ell_2')\linebreak[0]=1$, then  ~\eqref{bmr} implies that $\tilde \ell_1|a_2\tilde\ell_1'$, $\tilde \ell_1'|a_2\tilde\ell_1$ and $\tilde \ell_2|a_1\tilde\ell_2'$, $\tilde \ell_2'|a_1'\tilde\ell_2$, which, by definition of $\tilde\ell_i,\tilde\ell_i'$, is equivalent to $ \ell_1|a_2\ell_1'$, $ \ell_1'|a_2\ell_1$, $ \ell_2|a_1\ell_2'$, $ \ell_2'|a_1'\ell_2$. It follows that $\mathscr{V}^{\Delta=0}_{b,\eta}$ is bounded by
\as{
\mathscr{V}^{\Delta=0}_{b,\eta}& \ll
 \sumtwo_{\substack{\p_1\p_2n_1', \q_1\q_2n_2'\in \Nb}}
 \sumfour_{\substack{\ell_1,\ell_2,\ell_1',\ell_2'\in\LL,\\ \p_1,\q_1|(\ell_1,\ell_1'),\ \p_2,\q_2|(\ell_2,\ell_2'),\\\tilde \ell_1 \p_1 n_1' \neq \tilde \ell_2  \q_2n_2',\  \tilde \ell_1' \p_1 n_1' \neq \tilde \ell_2'  \q_2n_2'}}
 \sumtwo_{\substack{0\neq |d|,|d'|\leq \frac D{\q_1\p_2 },\\ d'|(\tilde \ell_1' \p_1 n_1' - \tilde \ell_2'  \q_2n_2') }}\notag\\
& 
 \sum_{\substack{c\mod{b\q_1\p_2},\\\tilde \ell_1 \p_1 n_1' \equiv \tilde \ell_2  \q_2n_2'-cd\eta,\\\hspace{2em}\mod {b\q_1\p_2|d|\eta}}}
\sumthree_{\substack{a_1,a_1', a_2\in \A,\\  \ell_1|a_2\ell_1',  \ell_1'|a_2\ell_1, \ell_2|a_1\ell_2',  \ell_2'|a_1'\ell_2,\\ a_1=f(\cdots),\ a_1'=g(\cdots) }}(|\nu_{a_1}|^2+ |\nu_{a_1'}|^2)\notag\\
& \ll M^\eps
 \sum_{\substack{\p_1\p_2n_1', \q_1\q_2n_2'\in \Nb}}
 \sumfour_{\substack{\ell_1,\ell_2,\ell_1',\ell_2'\in\LL,\\ \p_1,\q_1|(\ell_1,\ell_1'),\\ \p_2,\q_2|(\ell_2,\ell_2')}} \sumtwo_{\substack{a_1,a_2\in \A,\\  \ell_1|a_2\ell_1',\  \ell_1'|a_2\ell_1,\\ \ell_2|a_1\ell_2' }}|\nu_{a_1}|^2\notag\\
& \ll
 \|\nu\|^2\frac{AN^2M^{\eps}}{\p_1\p_2\q_1\q_2}
 \sumtwo_{\substack{\ell_1',\ell_2'\in\LL,\\ \p_1,\q_1|\ell_1',\  \p_2,\q_2|\ell_2'}} 1
 \ll\frac{ \|\nu\|^2AL^2N^{2}M^\eps}{\p_1\p_2\q_1\q_2(\p_1+\q_1)(\p_2+\q_2)},\label{2bd}
}
where we dropped the condition $(c,b\q_1\p_2)=1$ by positivity, and the sum over $c$ has the only effect of turning the congruence condition $\tilde \ell_1 \p_1 n_1' \equiv \tilde \ell_2  \q_2n_2'-cd\eta\mod {b\q_1\p_2|d|\eta}$ into $\tilde \ell_1 \p_1 n_1' \equiv \tilde \ell_2  \q_2n_2'\mod {|d|\eta}$, which then implies $d|(\tilde \ell_1 \p_1 n_1'-  \tilde \ell_2  \q_2n_2)$.

\textbf{The terms with $\Delta\neq0$.}
Here we bound $\mathscr{V}^{\Delta\neq0}_{b,\eta}$. Exchanging the order of summation and indicating with $G(\dots)$ the sum over $n_2'$, we see that
\es{\label{gwp}
\mathscr{V}^{\Delta\neq0}_{b,\eta}
& \ll
 \sum_{\substack{ \p_1\p_2n_1'\in \Nb,\\(b\vartheta ,n_1')=1,\\\mu^2(n_1')=1}}
 \sumfour_{\substack{\ell_1,\ell_2,\ell_1',\ell_2'\in\LL,\\ \p_1,\q_1|(\ell_1,\ell_1'),\ \p_2,\q_2|(\ell_2,\ell_2')\\(bn_1'\vartheta,\ell_1\ell_2\ell_1'\ell_2')=(\ell_1\ell_1',\ell_2\ell_2')=1}}
 \sumtwo_{\substack{0\neq |d|,|d'|\leq D/\q_1\p_2,\\(d,\tilde \ell_1   \tilde \ell_2  \p_1 \q_2n_1')=1\\ (d',\tilde \ell_1'   \tilde \ell_2'  \p_1 \q_2n_1')=1}}\\
 & 
 \sumstar_{\substack{c\mod{b\q_1\p_2},\\  (\tilde \ell_2' \tilde \ell_1-\tilde \ell_2\tilde \ell_1') \p_1 n_1' \equiv (\tilde \ell_2d'-\tilde \ell_2' d)c\eta\\ \hspace{5em}\mod {b\q_1\p_2\eta}}}
\sumthree_{\substack{a_1,a_1', a_2\in \A,\\ \Delta\neq 0,\\  (d, \ell_1, \ell_2)\neq (d', \ell_1', \ell_2')}}|\nu_{a_1} {\nu_{a_1'}}||G(\cdots)|,
}
where
\es{\label{hgfdev}
G(
\cdots):= \sum_{n_2',\, (*)}\e{\cdots},
}
and the argument of the exponential is
\es{\label{afed}
&\vartheta\Delta\frac{ \overline {\tilde \ell_2\tilde \ell_2'b\p_2   \q_2n_2'}}{\tilde \ell_1\tilde \ell_1'\q_1 \p_1n_1'}
-\frac{\vartheta a_2(d \tilde{ \ell_1'}-d' \tilde{ \ell_1})}{b \tilde \ell_1\tilde \ell_1'\p_1 n_1'  b\q_1\q_2n_2'}
\quad\mod1.\\
}
The condition $(*)$ indicates that $n_2'$ satisfies the following conditions:
\ast{
&n_2'\in \I\cap\I',\quad \q_1\q_2n_2'\in \Nb, &&\tilde \ell_1 \p_1 n_1' \equiv \tilde \ell_2  \q_2n_2'-cd\eta\mod {b\q_1\p_2|d|\eta},\\
&  (n_2',bdd'n_1'\ell_1\ell_2\ell_1'\ell_2')=1,&&
\tilde \ell_1' \p_1 n_1' \equiv \tilde \ell_2'  \q_2n_2'-cd'\eta\mod {b\q_1\p_2|d'|\eta},\\
&\tilde \ell_1 \p_1 n_1' \neq \tilde \ell_2  \q_2n_2', && \tilde \ell_1' \p_1 n_1' \neq \tilde \ell_2'  \q_2n_2'.
}
We remark that we could keep the condition~\eqref{acafd} and drop the last two summands of~\eqref{fvw}
  as they do not depend on $n_2'$. Also, notice that the condition  $(d,\ell_1,\ell_2)\neq(d',\ell_1',\ell_2')$ if $a_1\neq a_1'$ becomes simply $(d,\ell_1,\ell_2)\neq(d',\ell_1',\ell_2')$ since if these triples are equal then $\Delta\neq 0 $ implies $a_1\neq a_1'$.

We apply Lemma~\ref{ks} to the sum over $n_2'$, removing the second summand of~\eqref{afed} by using partial summation with the bound  
\est{
\vartheta \frac{a_2(d \tilde{ \ell_1'}-d' \tilde{ \ell_1})}{b \tilde \ell_1\tilde \ell_1'\p_1 n_1'  \q_1\q_2n_2'}\ll \frac{|\vartheta| ADL}{b \tilde \ell_1\tilde \ell_1'\q_1^2\p_1\p_2 n_1'  \q_1\q_2n_2'}\ll \frac{|\vartheta|AD}{bLN^2}.
}
Note that $n_2'$ runs over a finite union of intervals of length at most $O\pr{{N}/{\q_1\q_2}}$, with a congruence condition modulo $b\q_1\p_2[d,d']\eta$ (provided that the sum is non-empty), where $[a,b]$ is the LCM of $a$ and $b$. 

In the notation of Lemma~\ref{ks} (and under the various conditions of the sums in~\eqref{gwp}), we have $\gamma=\tilde \ell_1\tilde \ell_1'\q_1 \p_1n_1' $, $ k=b\q_1\p_2[d,d']\eta$ and
\ast{
& h=(\gamma,k)=\q_1([d,d'],\tilde \ell_1\tilde \ell_1')(\eta,n_1')
=\q_1(d,\tilde \ell_1')(d',\tilde\ell_1)(\eta,n_1'),\\
& h_1=(k^\infty,\gamma)=(\q_1^\infty[d,d']^\infty,\tilde \ell_1\tilde \ell_1' )(\eta,n_1')\q_1(\p_1,\q_1)=(\p_1,\q_1)h,
}
since $\q_1>1$ implies $\tilde \ell_1=\tilde \ell_1' =1$, whence
\ast{
&\gamma_1:=\frac{\gamma}{h_1}= \frac{\p_1}{(\p_1,\q_1)}\frac{n_1'}{(\eta,n_1')} \frac{\tilde \ell_1}{(d',\tilde \ell_1)}\frac{\tilde \ell_1'}{(d,\tilde \ell_1')}.
}
Thus, Lemma~\ref{ks} gives 
\ast{
G(\cdots)&\ll M^{\eps}\bigg((\p_1,\q_1)\bigg(\frac{N\tilde \ell_1\tilde\ell_1'}{(\p_1,\q_1)\p_2}\bigg)^{\frac12}+\frac{(\vartheta \Delta,\gamma_1)}{\gamma_1}\frac{N}{b\eta\q_1^2\q_2\p_2[d,d']}\bigg)\prBig{1+ \frac{|\vartheta|AD}{bLN^2}}\notag\\
&\ll M^{\eps}\pr{LN^{\frac12}+ \frac{(\Delta,n_1')\p_1}{b\q_1^2\q_2[d,d']}}\prBig{1+ \frac{|\vartheta|AD}{bLN^2}}.
}

Now, if we define
\est{
u:=(\tilde \ell_2' d-\tilde \ell_2d',b\q_1\p_2),\qquad v:=(\tilde \ell_2' d-\tilde \ell_2d')/u
}
(so that $(v,b\q_1\p_2/u)=1$), then by the congruence condition~\eqref{acafd}
we have 
\as{
&\tilde \ell_2' d\equiv \tilde \ell_2d'\mod u,\label{rtvd1}\\ 
&\p_1 \tilde \ell_1\equiv \p_1\overline{ \tilde \ell_2' } \tilde \ell_2\tilde \ell_1'\mod u\label{rtvd2}
}
since $(\tilde\ell_2',b\q_1\p_2)=1$. Also, if $v\neq0$, then
\es{\label{gfdae}
 c\equiv-\overline{ v\eta}\frac{(\tilde \ell_2' \tilde \ell_1-\tilde \ell_2\tilde \ell_1') \p_1 n_1'}{u}  \mod {\frac{b\q_1\p_2}{u}}.
}
We remark that we can assume $u\leq\frac{10DL}{\q_1\p_2}$. Indeed, if $u>\frac{10DL}{\q_1\p_2}>\tilde \ell_2' d+ \tilde \ell_2d' $ then~\eqref{rtvd1} implies $\tilde \ell_2' d=\tilde \ell_2d'$. Thus $u=b\q_1\p_2$ and $d=d'$, $\ell_2=\ell_2'$ since $(d,\tilde\ell_2)=(d',\tilde \ell_2')=1$. Moreover,~\eqref{rtvd2} would give $\p_1 \tilde \ell_1= \p_1\tilde \ell_1'$ (since we can assume $D/\q_1\p_2\geq1$) and thus $\ell_1=\ell_1'$.
So $(d, \ell_1, \ell_2)= (d', \ell_1', \ell_2')$, and these terms have been previously excluded.

Thus, if we bound trivially the sum over $c$ using~\eqref{gfdae} and drop some conditions by positivity
, we find
\as{
&\mathscr{V}^{\Delta\neq0}_{b,\eta}\ll
M^\eps \sum_{\substack{ \p_1\p_2n_1'\in \Nb}}
 \sumfour_{\substack{\ell_1,\ell_2,\ell_1',\ell_2'\in\LL,\\ \p_1,\q_1|(\ell_1,\ell_1'),\ \p_2,\q_2|(\ell_2,\ell_2'),\\(\ell_2',b\q_1\p_2)=1}}
 \sum_{\substack{u| b\q_1\p_2,\ u\leq \frac{10DL}{\q_1\p_2},\\ \p_1 \tilde \ell_1\equiv \p_1\overline{ \tilde \ell_2' } \tilde \ell_2\tilde \ell_1'\mod u}}
 \sumtwo_{\substack{ |d|,|d'|\leq D/\q_1\p_2,\\ d\equiv \overline{\tilde\ell_2'}\tilde \ell_2d'\mod u}} \notag\\
 &\hspace{4em} 
 \sumthree_{\substack{a_1,a_1', a_2\in \A,\\ \Delta\neq 0}}\,u\,|\nu_{a_1} {\nu_{a_1'}}|\pr{LN^{\frac12}+\frac{(\Delta,n_1')\p_1}{b\q_1^2\q_2[d,d']}}\prBig{1+ \frac{|\vartheta|AD}{bLN^2}} \notag\\
&\ll \|\nu\|^2 \frac{A^2LN^{\frac32}M^\eps}{\p_1\p_2}
 \sumfour_{\substack{\ell_1,\ell_2,\ell_1',\ell_2'\in\LL,\\ \p_1,\q_1|(\ell_1,\ell_1'),\\ \p_2,\q_2|(\ell_2,\ell_2'),\\(\ell_2',b\q_1\p_2)=1}}
 \sum_{\substack{u| b\q_1\p_2,\ u\leq \frac{10DL}{\q_1\p_2},\\ \p_1 \tilde \ell_1\equiv \p_1\overline{ \tilde \ell_2' } \tilde \ell_2\tilde \ell_1'\\\hspace{2em}\mod u}}\hspace{-0.6em}
 \sumtwo_{\substack{ |d|,|d'|\leq D/\q_1\p_2,\\ \hspace{0.8em}d\equiv \overline{\tilde\ell_2'}\tilde \ell_2d'\mod u}}
 \hspace{-0.5em}u\prBig{1+ \frac{|\vartheta|AD}{bLN^2}} \notag\\
&\ll \|\nu\|^2 \frac{A^2DL^{2}N^{\frac32}M^\eps}{(\p_1+\q_1)\p_1\q_1\p_2^2}
 \sumtwo_{\substack{\ell_2,\ell_2'\in\LL,\\ \p_2,\q_2|(\ell_2,\ell_2')}}
 \sum_{\substack{u| b\q_1\p_2,\\ u\leq \frac{10DL}{\q_1\p_2}}}\hspace{-0.2em}u\pr{\frac{L}{u}+1}
\pr{\frac{D}{u\q_1\p_2}+1}\prBig{1+ \frac{|\vartheta|AD}{bLN^2}}\notag \\
&\ll \|\nu\|^2 \frac{A^2DL^{2}N^{\frac32}M^\eps}{(\p_1+\q_1)\p_1\q_1\p_2^2}
\hspace{-0.1em} \sumtwo_{\substack{\ell_2,\ell_2'\in\LL,\\ \p_2,\q_2|(\ell_2,\ell_2')}}
\frac{DL}{\q_1\p_2}\prBig{1+ \frac{|\vartheta|AD}{bLN^2}}\notag \\
&\ll \frac{ \|\nu\|^2A^2D^2L^{5}N^{\frac32}M^\eps}{(\p_1+\q_1)(\p_2+\q_2)\p_1\q_1^2\p_2^3\q_2}\prBig{1+ \frac{|\vartheta|AD}{bLN^2}}.\label{boudn}
 }
\paragraph{Total bound for the terms with $(\ell_1\ell_1',\ell_2\ell_2')=1$}

By~\eqref{rbar},~\eqref{2bd} and~\eqref{boudn}, we obtain
\est{
\mathscr{V}_{b,\eta}\ll \|\nu\|^2M^\eps \prBig{1+ \frac{|\vartheta|AD}{bLN^2}}\frac{A^2D^2L^{5}N^{\frac32}+AL^2N^2 }{\q_1\p_2(\p_1+\q_1)(\p_2+\q_2)}
}
and thus, by~\eqref{stas} and~\eqref{1bd}, we have
\es{\label{1234}
\mathscr{U}_{b,\eta} &\ll \|\nu\|^2A^2L^{2}N^{\frac32}M^\eps
\frac{  (D  b^{\frac12}+L^3D^2+N^{\frac12}/ A)}{\q_1\p_2(\p_1+\q_1)(\p_2+\q_2)}\prBig{1+ \frac{|\vartheta|AD}{bLN^2}}.
}
\subsubsection{Bounding $\mathscr{U}^*_{b,\eta}$: the case $(\ell_1\ell_1',\ell_2\ell_2')>1$}\label{4f}
First, we observe that if $(\ell_1\ell_1',\ell_2\ell_2')>1$, then $(\ell_2,\ell_2')=(\ell_1,\ell_1')=1$ since we have $(\ell_1,\ell_2)=(\ell_1',\ell_2')=1$, and so $\p_1=\q_1=\p_2=\q_2=1.$ Thus, we can repeat the same arguments of Section~\ref{3f} in a slightly different but simplified form, and we obtain that the bound~\eqref{1234} holds also for $\mathscr{U}^*_{b,\eta}$. 

The only difference between this case and the $(\ell_1\ell_1',\ell_2\ell_2')=1$ case is that we cannot make the same choice of $\alpha,\beta,\gamma$
in~\eqref{cond:3congruence} as $\alpha,\beta,\gamma$ would not be pairwise coprime (and neither we could invert $\tilde \ell_2\tilde \ell_2' \ (\tn{mod }\tilde \ell_1\tilde \ell_1')$ in~\eqref{fvw}). To overcome this problem, it is enough to divide the sum over $n_2'$ into congruence classes modulo $(\ell_1\ell_1',\ell_2\ell_2')$ and apply~\eqref{cond:3congruence} with $\gamma=b(\ell_1\ell_1',\ell_2\ell_2')$ rather than $\gamma=b$ as in~\eqref{cond:-congruence}. The extra sum over the congruence classes modulo $(\ell_1\ell_1',\ell_2\ell_2')$ has the effect of making us lose a factor of $(\ell_1\ell_1',\ell_2\ell_2')$ in the $\Delta\neq0$ terms, but this loss is recovered by the extra condition between the $\ell_1,\ell_1',\ell_2,\ell_2'$. In fact, one can obtain a bound stronger than~\eqref{1234}  in this case, since the resulting Kloosterman has smaller modulus. 

\subsubsection{The final bound for $\mathscr{S}_{b,\eta}$}
Putting together~\eqref{aaebfr},~\eqref{eesbf} with the bound~\eqref{1234} and its analogue for $\mathscr{U}^*_{b,\eta}$, we obtain
\es{\label{ees}
\mathscr{S}_{b,\eta} & \ll \|\beta\|^2\|\nu\|^2 \Big(b+ \frac{|\vartheta|AD}{LN^2}\Big)^\frac12ALN^{\frac34}M^\eps\bigg(  D ^\frac12 b^{\frac14}+L^\frac32D+\frac{N^{\frac14}}{ A^\frac12}\bigg)\\
& \hspace{0em}\ll \|\beta\|^2\|\nu\|^2 \Big(b+ \frac{|\vartheta|A}{NM}\Big)^\frac12ALN^{\frac34}M^\eps\bigg(  \frac{b^{\frac14} N^\frac12 L^\frac12}{M^\frac12}+\frac{L^\frac52N}M+\frac{N^\frac14}{ A^\frac12}\bigg),
}
since $D=3\frac{NL}M$.

\subsection{The terms with $(n_1',n_2')>1$}

In this section we bound $\mathscr{S}^*_{b,\eta}(M,N,A,L; \beta,\nu)$, which consists of the sum \eqref{fafe} restricted to $(n_1',n_2')>1$.
We recall the definition~\eqref{vae1} of $n_1'$, $n_2'$:
\est{
n_1':=\frac{n_1}{(n_1,\ell_1\ell_2)},\quad n_2':=\frac{n_2}{(n_2,\ell_1\ell_2)}
}
We write $\mu=(n_1',n_2')$. This implies $(\mu,\ell_1\ell_2)=1$ and $n_1=\mu h_1$, $n_2=\mu h_2$. Thus, denoting $h_1':=\frac{h_1}{(h_1,\ell_1,\ell_2)}$, $h_2':=\frac{h_2}{(h_2,\ell_1,\ell_2)}$, we automatically have $(h_1',h_2')=(h_1'h_2',\ell_1\ell_2)=1$ since $n_1,n_2$ are square-free. It follows that 
\est{
\mathscr{S}^*_{b,\eta} &=\hspace{-0.4em}
 \sum_{\substack{\mu>1,\\ (\mu,\vartheta b)=1}}
 \sumseven_{\substack{\ell_1, \ell_2\in\LL,m\in\M,h_1 , h_2\in \Nb_\mu,a_1,a_2\in\A \\(mb\mu \vartheta,\ell_1\ell_2h_1h_2)=(m,\mu b)=(\ell_1,\ell_2)=(h_1',h_2')=1,
 \\\ell_1h_1\equiv \ell_2h_2\mod m,\ (h_1'h_2',\ell_1\ell_2)=1 \\ \ell_1 h_1\neq \ell_2h_2,\ \eta|m}}\hspace{-0.7em}
\beta_{\mu h_1}\nu_{a_1}\overline {\beta_{\mu h_2}\nu_{a_2}}\e{\frac{a_1\overline m}{b\mu h_1}-\frac{a_2\overline m}{b\mu h_2}}\\
&=\sum_{\substack{1<\mu\leq N,\\ (\mu,\vartheta b)=1}}\mathscr{S}_{b,\eta}(M,N/\mu,A,\LL, \beta_\mu,\nu), 
}
where $\beta_\mu(n):=\beta_{\mu n}$ and $\Nb_\mu=[N/(2\mu),N/\mu]$. Thus, by~\eqref{ees}
\as{
\mathscr{S}^*_{b,\eta} &\ll \|\nu\|^2 \prBig{b+ \frac{|\vartheta|A}{NM}}^\frac12ALN^{\frac34}M^\eps\sum_{\mu\leq N}\frac{\|\beta_\mu\|^2}{\mu^{\frac14}}\prbigg{  \frac{b^\frac14 N^\frac12 L^\frac12}{\mu^\frac14 M^\frac12}+\frac{L^\frac52N}{\mu M}+\frac{N^\frac14}{\mu^\frac14 A^\frac12}}\notag\\
& \ll \|\beta\|^2\|\nu\|^2 \prBig{b+ \frac{|\vartheta|A}{NM}}^\frac12ALN^{\frac34}M^\eps\prbigg{  \frac{b^\frac14 N^\frac12 L^\frac12}{M^\frac12}+\frac{L^\frac52N}M+\frac{N^\frac14}{ A^\frac12}}.\label{rtyu}
}

\subsection{The final bound for the off-diagonal term}
By~\eqref{dft} and the bounds~\eqref{ees} and~\eqref{rtyu}, we have
\est{
\mathscr{E}_{b,\eta}& \ll \|\beta\|^2\|\nu\|^2 \prBig{b+ \frac{|\vartheta|A}{NM}}^\frac12ALN^{\frac34}M^\eps\bigg(  \frac{b^\frac14 N^\frac12 L^\frac12}{M^\frac12}+\frac{L^\frac52N}M+\frac{N^\frac14}{ A^\frac12}\bigg).
}
Thus, by~\eqref{tqv}
\est{
\mathscr{E}_{b,1}^* \ll \|\beta\|^2\|\nu\|^2 \prBig{b+ \frac{|\vartheta|A}{NM}}^\frac12ALN^{\frac34}M^\eps\bigg(  \frac{b^\frac14 N^\frac12 }{M^\frac12}+\frac{N}M+\frac{N^\frac14}{ A^\frac12}\bigg)
}
and so, by~\eqref{gyui},
\es{\label{gfdd}
 \mathscr{O}_{b} & \ll \|\beta\|^2\|\nu\|^2 \prBig{b+ \frac{|\vartheta|A}{NM}}^\frac12ALN^{\frac34}M^\eps\bigg(  \frac{b^\frac14 N^\frac12 L^\frac12}{M^\frac12}+\frac{L^\frac52N}M+\frac{N^\frac14}{ A^\frac12}\bigg).
}

\section{Optimizing the parameter $L$}\label{optl}
Combining~\eqref{def:DbEb} with the bounds for the diagonal~\eqref{eqn:diagonal_ellnequal} and off-diagonal terms~\eqref{gfdd} we obtain
\est{
\mathcal{D}_b&\ll \|\beta\|^2\|\nu\|^2  \prBig{1+ \frac{|\vartheta|A}{bNM}}^\frac12 LM^\eps\bigg(A(bLN)^{\frac 12}+\frac{AM}{bN}+M\\
&\hspace{10em}+ \frac{b^\frac34A N^\frac54 L^\frac12}{M^\frac12}+\frac{b^\frac12AL^\frac52N^\frac74}M+b^\frac12 A^\frac12N\bigg)
}
and thus, by~\eqref{amp},
\es{\label{rve}
\mathcal{C}_b &\ll \|\beta\|^2\|\nu\|^2 M^\eps \prBig{1+ \frac{|\vartheta|A}{bNM}}^\frac12 \bigg(\frac{AM(bN)^{\frac 12}}{L^\frac12}+\frac{AM^2}{bLN}+\frac{M^2}L\\
&\hspace{10em}+ \frac{b^\frac34A M^\frac12 N^\frac54}{ L^\frac12}+b^\frac12AL^\frac32N^\frac74+\frac{b^\frac12 A^\frac12MN}L\bigg).
}
We wish to choose $L$ so that 
\est{
b^\frac12AL^\frac32N^\frac74\approx\frac{AM^2}{bLN}+\frac{M^2}L+\frac{b^\frac12 A^\frac12MN}L, 
}
and $L\geq2\log (b\vartheta M)$. So we take
\est{
L=\frac{M^{\frac45}}{b^\frac35 N^{\frac{11}{10}}}+\frac{M^\frac45}{b^\frac15 A^\frac25N^{\frac7{10}}}+\frac{ M^\frac25}{A^\frac15N^\frac3{10}}+M^\eps.
}
With this choice~\eqref{rve} implies
\as{
\mathcal{C}_b &\ll \|\beta\|^2\|\nu\|^2M^\eps \prBig{1+ \frac{|\vartheta|A}{bNM}}^\frac12 \bigg(AM(bN)^{\frac 12}+ b^\frac34A M^\frac12 N^\frac54\notag\\
&\qquad+b^\frac12AN^\frac74\bigg(\frac{M^{\frac45}}{b^\frac35 N^{\frac{11}{10}}}+\frac{M^\frac45}{b^\frac15 A^\frac25N^{\frac7{10}}}+\frac{ M^\frac25}{A^\frac15N^\frac3{10}}+M^\eps\bigg)^\frac32\bigg)\notag\\
&\ll \|\beta\|^2\|\nu\|^2 M^\eps \prBig{1+ \frac{|\vartheta|A}{bNM}}^\frac12 \bigg(AM(bN)^{\frac 12}+ b^\frac34A M^\frac12 N^\frac54\notag\\
&\qquad+\frac{AM^{\frac65}N^\frac1{10}}{b^\frac2{5} }+b^\frac1{5}A^\frac25 M^\frac65N^\frac7{10}+A^\frac7{10} b^\frac12M^\frac35N^\frac{13}{10}+b^\frac12AN^\frac74\bigg).\label{rveta}
}

\section{Removing the square-free condition}\label{rsq}

We write $n=bn'$, where $n'$ is square-free, $b$ is square-full, and $(b,n')=1$. We have
\est{
&\bigg|\sum_{\substack{a\in \A,}}\sum_{\substack{n\in \Nb,\\(m,n)=1}}\beta_n\nu_a\e{\vartheta a\frac{\overline m}{n}}\bigg|^2=\bigg|\sum_{\substack{b\leq N,\\ (b,\vartheta )=1,\\ b\tn{ square-full}}}\sum_{\substack{a\in \A,}}\sum_{\substack{bn'\in \Nb,\\(b\vartheta m,n')=1,\\\ n'\tn{square-free}}}\beta_{b n'}\nu_a\e{\vartheta a\frac{\overline m}{b n'}}\bigg|^2\\
&\hspace{2em}\ll\bigg(\sum_{\substack{b\leq N,\\ b\tn{ square-full}}}\hspace{-0.2em}b^{-\frac12}\bigg) \bigg(\sum_{\substack{b\leq N,\ (b,\vartheta )=1\\ b\tn{ square-full}}}b^{\frac12}\bigg|\sum_{\substack{a\in \A,}}\sum_{\substack{bn'\in \Nb,\\(b\vartheta  m,n')=1}}\mu(n')^2\beta_{b n'}\nu_a\e{\vartheta  a\frac{\overline m}{b n'}}\bigg|^2\bigg)\\
&\hspace{2em}\ll\sum_{\substack{b\leq N,\ (b,\vartheta )=1\\ b\tn{ square-full}}}b^{\frac12}\,\bigg|\sum_{\substack{a\in \A,}}\sum_{\substack{bn'\in \Nb,\\(b\vartheta  m,n')=1}}\mu(n')^2\beta_{b n'}\nu_a\e{\vartheta  a\frac{\overline m}{b n'}}\bigg|^2 M^\eps.
}
Thus,
\est{
\mathcal{C}_1(M,N,A;\beta,\nu)&=\sum_{\substack{m\in \M}}\bigg|\sum_{\substack{a\in \A,}}\sum_{\substack{n\in\Nb,\\(m,n)=1}}\beta_n\nu_a\e{a\frac{\overline m}{n}}\bigg|^2\\
&\ll\hspace{-0.4em} \sum_{\substack{b\leq N,\ (b,\vartheta )=1,\\ b\tn{ square-full}}}\hspace{-0.4em}b^{\frac12}\sum_{\substack{m\in \M,\\(m,b)=1}}\bigg|\sum_{\substack{a\in \A}}\sum_{\substack{n'\in \Nb_b,\\(b\vartheta  m,n')=1}}\mu(n')^2\beta_{b n'}\nu_a\e{\vartheta  a\frac{\overline m}{b n'}}\bigg|^2M^\eps\\
&= M^\eps\sum_{\substack{b\leq N,\ (b,\vartheta )=1\\ b\tn{ square-full}}}b^{\frac12}\mathcal{C}_b(M,N/b,A;\beta_b,\nu) \\
}
where $\beta_b(n):=\mu(n)^2\beta_{bn}$ and $\Nb_b:=[N/(2b),N/b]$. Now, if $b \leq B$, we apply~\eqref{rveta} and obtain that the contribution of those terms to $\mathcal{C}_1$ is bounded by
\es{\label{rvetad}
& \ll \|\nu\|^2M^\eps \prBig{1+ \frac{|\vartheta |A}{NM}}^\frac12 \sum_{\substack{b\leq B,\\ b\tn{ square-full}}}b^{\frac12}  \|\beta_b\|^2\bigg(AMN^{\frac 12}+ \frac{A M^\frac12 N^\frac54}{b^\frac12}\\
&\hspace{7em}+\frac{AM^{\frac65}N^\frac1{10}}{b^\frac12 }+\frac{A^\frac25 M^\frac65N^\frac7{10}}{b^\frac12}+\frac{A^\frac7{10} M^\frac35N^\frac{13}{10}}{b^\frac45}+\frac{AN^\frac74}{b^\frac54}\bigg)\\
& \ll \|\beta\|^2\|\nu\|^2M^\eps \prBig{1+ \frac{|\vartheta |A}{NM}}^\frac12 \bigg(AMN^{\frac 12}B^\frac12+ A M^\frac12 N^\frac54+AM^{\frac65}N^\frac1{10}+\\
&\hspace{13em}+A^\frac25 M^\frac65N^\frac7{10}+A^\frac7{10} M^\frac35N^\frac{13}{10}+AN^\frac74\bigg).\\
}
For $b> B$ we apply the trivial bound $\mathcal{C}_b(M,N/b,A,\beta_b,\nu)\ll \|\nu\|^2 A\frac{MN}b\|\beta_b\|$ and get that these terms contribute
\es{\label{fdew}
\ll \|\nu\|^2 ANM^{1+\eps} \sum_{\substack{b>B,\\ b\tn{ square-full}}}\|\beta_b\|^2 \frac{1}{b^{\frac12}} \ll  \|\beta\|^2\|\nu\|^2 \frac{ANM^{1+\eps}}{B^{\frac12}}.
}
We choose $B=N^{\frac12}$, so that combining~\eqref{fdew} and~\eqref{rvetad} we obtain
\est{
 \mathcal{C}_1&\ll \|\beta\|^2\|\nu\|^2M^\eps \prBig{1+ \frac{|\vartheta |A}{NM}}^\frac12 \\
&\times\hspace{0em}\big(AMN^{\frac 34}+ A M^\frac12 N^\frac54+AM^{\frac65}N^\frac1{10}+A^\frac25 M^\frac65N^\frac7{10}+A^\frac7{10} M^\frac35N^\frac{13}{10}+AN^\frac74\big).\\
}
Notice that the third summand on the second line can be absorbed. Indeed, if $M\ll N^2$, then $AM^{\frac65}N^\frac1{10}\ll AMN^{\frac34}$, whereas if $M\gg N^2$ then one can obtain a stronger bound from Theorem 5 of~\cite{DFI97}, which gives 
\es{\label{gfdscreb}
 \mathcal{C}_1&\ll \|\beta\|^2\|\nu\|^2AM^{1+\eps}
}
in such range. Moreover, we also have $AM^\frac12N^\frac54\ll AMN^\frac34+AN^\frac74$ and thus
\es{\label{ryx}
 \mathcal{C}_1&\ll \|\beta\|^2\|\nu\|^2(MN)^\eps \prBig{1+ \frac{|\vartheta |A}{NM}}^{\frac12}\\
 &\quad\times \big(AMN^{\frac 34}+AN^\frac74+A^\frac25 M^\frac65N^\frac7{10}+A^\frac7{10} M^\frac35N^\frac{13}{10}\big),
}
and this bound holds also without the assumption $\vartheta ,A,N\ll M^{C}$, for some $C>0$, since it is trivial otherwise.

\section{Completion of the proof of Theorem~\ref{thm:boundTri}}\label{rsq3}

Combining the bound~\eqref{ryx} for $\mathcal{C}_1$ and~\eqref{bfc}, we deduce
\es{\label{unconditional}
 \mathcal{B}(M,N,A)&\ll \|\alpha\|\|\beta\|\|\nu\|(AMN)^\eps \prBig{1+ \frac{|\vartheta |A}{NM}}^{\frac14}\\
 &\quad \big(A^\frac12M^\frac12N^{\frac 38}+A^\frac12N^\frac78+A^\frac15 M^\frac35N^\frac7{20}+A^\frac7{20} M^\frac3{10}N^\frac{13}{20}\big).
}
If $M\geq N$ this implies
\est{
 \mathcal{B}(M,N,A)&\ll \|\alpha\|\|\beta\|\|\nu\|(MN)^\eps \prBig{1+\frac{|\vartheta |A}{MN}}^\frac14\big(A^\frac12M^\frac12N^{\frac 38}+A^\frac7{20} M^\frac35N^\frac7{20}\big),
}
which is Theorem~\ref{thm:boundTri} in the range $M\geq N$. We then use the following remark to obtain~\eqref{mrthm:boundTri} also in the range $M < N$.

\begin{remark}
All the computations of Section~\ref{sec:diagonal_ln_equald} and~\ref{sec:offdiagonal:initial} work, applying partial summation at appropriate places,  also when an extra addend $f_{a, \vartheta}(m,bn)$ (with $f_{a, \vartheta}(x,y)$ satisfying \eqref{tckgf}) is inserted in the exponential function in the definition~\eqref{amp} of $\mathcal C_b$. Thus, one arrives at the bound 
\es{\label{gfd}
\mathcal B(f;M,N,A)&\ll \|\alpha\|\|\beta\|\|\nu\|(MN)^\eps \Big(1+\frac{|\vartheta |A+X}{MN}\Big)^\frac{1}{2}\\
&\quad\times\big(A^\frac12M^\frac12N^{\frac 38}+A^\frac7{20} M^\frac35N^\frac7{20}\big)
}
in the case $M\geq N$. Notice that the exponent of $ (1+\frac{|\vartheta |A+X}{MN})$ is now $\frac{1}{2}$ because in this case we need to apply partial summation also when dealing with the diagonal term and for the 
analogue of~\eqref{gfdscreb}.
\end{remark}
We now observe that the elementary reciprocity law allows us to write $\mathcal{B}(M,N,A)$ as
\est{
\mathcal{B}(M,N,A)=\sumthree_{\substack{m\in\M ,n\in\Nb,a\in\A ,\\(m,n)=1}}\alpha_{m}\beta_n\nu_a\e{-\vartheta \frac{a\overline n}{m}+\frac{\vartheta  a}{mn}}.
}
Thus, if $M<N$ we can apply~\eqref{gfd} with the role of $M$ and $N$ switched and with $f_{a,\vartheta}(x,y):=\frac{\vartheta a}{xy}$. Since $f_{a,\vartheta}(x,y)$ satisfies the condition~\eqref{tckgf} with $X=|\vartheta |A$, we obtain~\eqref{mrthm:boundTri} for $M < N$, and so the proof of Theorem~\ref{thm:boundTri} is complete.

\section{Proof of Theorem~\ref{thtw}}\label{wec}

We proceed in the same way as in the proof of Theorem~\ref{thm:boundTri}, with very minor modifications. First, we assume $bN\geq M$ and in the off-diagonal term we split the sum modulo $m$ into classes $m\equiv c\mod {4b\q_1\p_2}$ with $(m,b\q_1\p_2)=1$ rather than modulo $b\q_1\p_2$. Notice that this implies $ \tilde \ell_2  \q_2n_2' \equiv\tilde \ell_1 \p_1 n_1' -cd\mod {4d}$. Then, we can repeat the same arguments keeping the Jacobi symbol $(\frac{m}{n_1n_2})$ inside the summations, so that in the analogue of Section~\ref{gfdsa} this factor will become
 \est{
\pr{\frac{m}{n_1n_2}}&=\pr{\frac{c}{\q_1\p_2}}\pr{\frac{m}{\p_1\q_2n_1'n_2'}}=\pr{\frac{c}{\q_1\p_2}}\pr{\frac{d}{\p_1\q_2n_1'n_2'}}\pr{\frac{dm}{\p_1\q_2n_1'n_2'}}\\
&=\bigg(\frac{c}{\q_1\p_2}\bigg)\bigg(\frac{d}{\p_1\q_2n_1'}\bigg)\bigg(\frac{d}{\tilde \ell_2\q_2n_2'}\bigg)\bigg(\frac{d}{\tilde \ell_2\q_2}\bigg)\bigg(\frac{  -\tilde \ell_2  \q_2n_2'}{\p_1n_1'}\bigg)\bigg(\frac{ \tilde \ell_1 \p_1 n_1'}{\q_2n_2'}\bigg)\\
&=f\pr{c,\p_1,\p_2,\q_1,\q_2,n_1',n_2'} g\pr{c,\p_1,\p_2,\q_1,\q_2,n_1',\ell_1,\ell_2}\bigg(\frac{ \tilde \ell_1}{n_2'}\bigg),\\
}
where
\est{
f\pr{\dots}&:=\pr{\frac{c}{\q_1\p_2}}\pr{\frac{  -  \q_2n_2'}{\p_1n_1'}}\pr{\frac{  \p_1 n_1'}{\q_2n_2'}},\\
g\pr{\dots}&:=\bigg(\frac{d}{\p_1\q_2n_1'}\bigg)\bigg(\frac{d}{\tilde \ell_1 \p_1 n_1' -cd}\bigg)\bigg(\frac{d}{\tilde \ell_2\q_2}\bigg)\bigg(\frac{  \tilde \ell_2 }{\p_1n_1'}\bigg)\bigg(\frac{ \tilde \ell_1}{\q_2}\bigg),
}
and where we used that $\pr{\frac{a}{b}}$ is periodic modulo $b$ in $a$ if $b\geq1$ and is periodic modulo $4|a|$ in $b$ if $a\neq0$.

Thus, we arrive at the analogue of~\eqref{hgfdev} which will include an extra factor of $(\frac{ \tilde \ell_1}{n_2'})$. The rest of the argument is identical, with the difference that we apply~\eqref{kseq2} rather than~\eqref{kseq}. The fact that the former bound is weaker than the latter does not effect the arguments when $bN\geq M$. Thus, we obtain~\eqref{unconditional} and whence~\eqref{acaavcf} when $N\geq M$. This also implies~\eqref{acaavcf} for $M>N$, as can be seen by using the reciprocity formulae for the Jacobi symbol and for $\frac{\overline m}{n}$ and applying the bound for the case $N\geq M$.

\section{Proof of Corollary~\ref{c1}}\label{pc1}
Proceeding as in~\cite{DFI95}, we see that the error term in~\eqref{repd} is equal to
\est{
&\sum_{d|\Delta}\int_0^{4D}\hspace{-1.2em}\sumthree_{\substack{dn_1\in\Nb_1,dn_2\in\Nb_2,h\in\Z\\ 1\leq |h|\leq HD^\eps,\,(n_1,n_2)=1}}\hspace{-0.3em}\!\frac{\tilde\alpha_{dn_1}(x)\tilde\beta_{dn_2}(x+\Delta)}{dn_1n_2}\e{\frac{h\Delta}{d}\frac{\overline n_1}{n_2}}\!\e{\frac{hx}{dn_1n_2}}\,dx
+O(\eta),
}
with $H=\frac{\eta}{d}(\frac{N_1}{M_1}+\frac{N_2}{M_2})$, $D=M_1N_2+M_2N_1$ and $\alpha_{r}(x):=\alpha_{r}g\pr{ x/{r}}$, $\tilde\beta_{r}:=\beta_{r} f\pr{{x}/{r}}$. Applying Theorem~\ref{thm:boundTri} (in the version given by Remark~\ref{ptrmk}) we see that we can bound the above sum by
\est{
&\ll \sum_{d|\Delta}\sum_{1\leq |h|\leq HD^\eps} \frac{dD}{N_1N_2}\frac{\|\alpha\| \|\beta\|}{d^{\frac{1}{2}}} \Big(1+\frac{dHD}{N_1N_2}\Big)^\frac{1}{2}(N_1N_2)^{\frac7{20}}(N_1+N_2)^{\frac14+\eps},\\
&\ll \eta^{\frac{3}{2}}\pr{\frac{M_1N_2}{M_2N_1}+\frac{M_2N_1}{M_1N_2}}^\frac{3}{2}\|\alpha\|\|\beta\|(N_1N_2)^{\frac7{20}}(N_1+N_2)^{\frac14+\eps}(M_1M_2)^\eps,\\
}
since we can assume $|\Delta|\leq 4D$ (otherwise both $\mathcal{T}$ and the main term on the right hand side of~\eqref{repd} are identically zero) and since 
\est{
\frac{dHD}{N_1N_2}={\eta}\pr{\frac{N_1}{M_1}+\frac{N_2}{M_2}} \pr{\frac{M_1}{N_1}+\frac{M_2}{N_2}}\leq2 \eta\pr{\frac{M_1N_2}{M_2N_1}+\frac{M_2N_1}{M_1N_2}}.
}

\appendix

\section{Appendix: Weil's bound for incomplete Kloosterman sums}

In this appendix we give the following bounds for incomplete Kloosterman sums where the sums are subject to some coprimality and congruence conditions.

\begin{lemma}\label{ks}
Let $\delta,k,\gamma\geq1$, $\alpha,a,b,v\in\Z$ and let $I=\{x\in [X',X'+X], x\equiv v \mod k\}$. Let $h:=(k,\gamma)$, $h_1:=(k^\infty, \gamma)$, $\gamma_1:=\frac \gamma {h_1}  $, where  $(m^\infty,n):=\lim_{r\rightarrow\infty}(m^r,n)$. Then,
\es{\label{kseq}
\sum_{\substack{x\in I,\\(x,\gamma\delta)=1}}\e{\frac{\alpha\overline x}{\gamma}}\ll (\gamma\delta)^\eps\frac{h_1}{h}\pr{\frac{\gamma_1}{(\alpha,\gamma_1)}}^{\frac1{2}}+(\alpha,\gamma_1)\frac{X\delta^\eps}{\gamma_1 k}.
}
Moreover, if $\chi$ is a Dirichlet character modulo $\gamma$ and $c,d\geq1$, $a,b,\beta\in\Z$, then
\es{\label{kseq2}
\sum_{\substack{x\in I,\\(x,\gamma\delta)=1,\\ (ax+b,c)=d}}\!\chi(x)\e{\frac{\alpha\overline x+\beta x}{\gamma}}\ll (c\gamma\delta)^\eps\frac{h_1}{h}\pr{\frac{\gamma_1}{(\alpha,\gamma_1)}}^{\frac1{2}}+(\alpha,\gamma_1)^{\frac12}\gamma_1^{\frac12+\eps}\frac{X(c\delta)^\eps}{\gamma_1 k}.
}
\end{lemma}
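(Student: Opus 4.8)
The plan is to prove both \eqref{kseq} and \eqref{kseq2} by the classical route of \emph{completing} the incomplete sum and invoking Weil's bound for the resulting complete (possibly character-twisted) Kloosterman sums; the actual work lies in the bookkeeping of common divisors. First I would strip off the side conditions. The condition $(x,\delta)=1$ is removed by M\"obius inversion, $\mathbbm 1_{(x,\delta)=1}=\sum_{e\mid(x,\delta)}\mu(e)$; since $e\mid x$ and $(x,\gamma)=1$ force $(e,\gamma)=1$, this reduces matters to $\ll\delta^\eps$ sums of the same shape with $k$ replaced by $\operatorname{lcm}(k,e)\ge k$, leaving $h=(k,\gamma)$, $h_1=(k^\infty,\gamma)$ and $\gamma_1$ unchanged and only decreasing the second term. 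For \eqref{kseq2} the condition $(ax+b,c)=d$ is handled the same way: $d\mid ax+b$ is a congruence on $x$, while $((ax+b)/d,c/d)=1$ is opened by M\"obius over the divisors of $c/d$, each imposing a further congruence. At a cost of a factor $(c\delta)^\eps$ in the number of sub-sums, and carrying along the character $\chi$ and the additive twist $\e{\beta x/\gamma}$, one is reduced to bounding, for a single progression $x\equiv v\mod k$ and an interval $I$ of length $\le X$, a sum $S=\sum_{x\in I,\,(x,\gamma)=1}\chi(x)\e{(\alpha\overline x+\beta x)/\gamma}$.

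Next I would complete $S$. Write $\gamma=h_1\gamma_1$ with $\operatorname{rad}(h_1)\mid k$ and $(\gamma_1,k)=1$, so the condition $(x,h_1)=1$ is fixed by $x\equiv v\mod k$ (hence vacuous, or else $S=0$). Substituting $x=v+ky$ and using that $k/h$ and $\gamma/h$ are coprime, the summand is periodic in $y$ with a period $P$. In the setting of \eqref{kseq} (no character, no linear term) I would first lower the period by extracting $g:=(\alpha,\gamma_1)$: splitting $\gamma_1$ into the part annihilated by $\alpha$ and its cofactor shows that $\e{\alpha\overline x/\gamma_1}$ has true period $\gamma_1/g$, the leftover coprimality conditions for the annihilated primes being coprime to $k$ and costing only $\gamma^\eps$ by one more M\"obius inversion; this gives $P\asymp(h_1/h)\,\gamma_1/(\alpha,\gamma_1)$. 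In the setting of \eqref{kseq2} the linear term and $\chi$ prevent this reduction and $P\asymp(h_1/h)\gamma_1$. Completion then yields
\[
S=\frac1P\sum_{t\bmod P}\widehat F(t)\sum_{y\in J}\e{ty/P},\qquad\Big|\sum_{y\in J}\e{ty/P}\Big|\ll\min\!\Big(\tfrac Xk,\ \tfrac{1}{\|t/P\|}\Big),
\]
where, by the Chinese Remainder Theorem, $\widehat F(t)$ factors into a sum of length $\le h_1/h$ over the residues $\equiv v\mod h$ times a complete (possibly $\chi$-twisted) Kloosterman sum to the relevant part of $\gamma_1$.

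Then I would estimate the two ranges of $t$. The term $t=0$ contributes $(|J|/P)\,\widehat F(0)$, with $\widehat F(0)$ equal to $h_1/h$ times a Ramanujan sum of absolute value $\le1$ in the case of \eqref{kseq}, or $h_1/h$ times a Gauss/Weil sum of size $\ll(\alpha,\gamma_1)^{1/2}\gamma_1^{1/2+\eps}$ in the case of \eqref{kseq2}; since $|J|/P\ll Xh(\alpha,\gamma_1)/(k\gamma)$, resp.\ $\ll Xh/(k\gamma)$, this gives precisely the second term of each bound. For $t\ne0$ I would apply Weil's bound to the complete (twisted) Kloosterman sum --- of modulus $\gamma_1/(\alpha,\gamma_1)$ after the period reduction in the case of \eqref{kseq}, and of modulus $\gamma_1$ but with the extra cancellation coming from averaging the Weil gcd $(\alpha,\beta-t',\gamma_1)$ over the completion variable in the case of \eqref{kseq2} --- obtaining $|\widehat F(t)|\ll\gamma^\eps\,\frac{h_1}{h}\big(\gamma_1/(\alpha,\gamma_1)\big)^{1/2}$ uniformly in $t$; combined with $\frac1P\sum_{t\ne0}\|t/P\|^{-1}\ll\log P$ this produces the first term. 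Summing over the $\ll(c\gamma\delta)^\eps$ sub-sums from the M\"obius reductions finishes the proof.

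The step I expect to be the main obstacle is this last gcd accounting: keeping $(k,\gamma)$, $(k^\infty,\gamma)$, $(\alpha,\gamma_1)$ and the divisors introduced when detecting $(ax+b,c)=d$ simultaneously under control in the presence of the character $\chi$ and the linear term $\beta x/\gamma$, so that the cumulative loss really stays within $(c\gamma\delta)^\eps$, the period reduction is legitimate, and the coprimality conditions for the primes dividing $(\alpha,\gamma_1)$ do not secretly shorten $I$ in a way that would spoil the second term.
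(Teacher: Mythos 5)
Your proposal takes essentially the same route as the paper: Möbius inversion to strip the coprimality and $(ax+b,c)=d$ conditions, CRT to isolate the part of $\gamma$ sharing primes with $k$ (your period discussion is the same reduction), and then completion (Erdős--Turán in the paper, direct discrete Fourier expansion in your version, which is the same thing) followed by Weil's bound, with the Ramanujan sum handling the zero frequency in~\eqref{kseq}. The one place where the paper is more careful than you are is the spot you flag as the main obstacle: after the Möbius inversion detecting $(ax+b,c)=d$, the new congruence modulus $r$ may share primes with $\gamma$, and the paper handles this by splitting $r=g_1g_2$ with $g_1$ the "$\gamma$-coprime" part, factoring $\gamma=f_1f_2$ and using orthogonality of additive characters modulo $g_2$ to reduce to the $c=1$ case, together with the verifications $\gamma_1'\mid\gamma_1$ and $\gamma_1'^{1/2}k'\ge\gamma_1^{1/2}k$ so the bound degrades correctly.
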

\begin{proof}
We start by proving~\eqref{kseq}. 
First of all, we notice that we can assume that $(\delta,\gamma k)=(v,h)=1$ and $k\leq X$. Also, the case $\delta>1$ can be easily obtained from the case $\delta=1$ by M\"obius inversion. Indeed
\est{
\sum_{\substack{x\in I,\\(x,\gamma\delta)=1}}\e{\frac{\alpha\overline x}{\gamma}}&=\sum_{\substack{r_1|\delta}}\mu(r_1){\sum_{\substack{x\in I,\ (x,\gamma)=1,\\ x\equiv 0\mod {r_1}}}\e{\frac{\alpha\overline x}{\gamma}}}\\
&\ll \sum_{d|\delta}\pr{\gamma^\eps \frac{h_1}{h}\pr{\frac{\gamma_1}{(\alpha,\gamma_1)}}^{\frac1{2}+\varepsilon}+(\alpha,\gamma_1)\frac{X}{\gamma_1 k}}\\
&\ll (\gamma \delta)^\eps \frac{h_1}{h}\pr{\frac{\gamma_1}{(\alpha,\gamma_1)}}^{\frac1{2}}+\frac{(\alpha,\gamma_1)X\delta^\eps}{\gamma_1k},
}
by~\eqref{kseq} in the case $\delta=1$. Similarly, the case $(k,\gamma)>1$ can be recovered from the case $h=(k,\gamma)=1$:
\est{
&\sum_{\substack{x\in I,\\(x,\gamma)=1}}\!\e{\frac{\alpha\overline x}{\gamma}}=\hspace{-0.25em}\!\sumstar_{\substack{c\mod {h_1},\\ c\equiv v\,(\tn{mod}\, h)} }\sum_{\substack{x\in I_c,\\(x,\gamma_1)=1}}\!\e{\frac{\alpha\overline x}{\gamma}}=\hspace{-0.45em}\sumstar_{\substack{c\mod {h_1},\\ c\equiv v\,(\tn{mod}\, h)} }\!\e{\frac{\alpha\overline {c\gamma_1}}{h_1}}\!\!\sum_{\substack{x\in I_c,\\(x,\gamma_1)=1}}\!\e{\frac{\alpha\overline {h_1x}}{\gamma_1}}\\
&\hspace{3em}\ll\sumstar_{\substack{c\mod {h_1},\\ c\equiv v\mod h } }\bigg(\frac{\gamma_1^{\frac12+\eps}}{(\alpha,\gamma_1)^{\frac12}}+\frac{(\alpha,\gamma_1)X}{\gamma_1 k\frac{h_1}{h}}\bigg)\ll \delta ^\eps\frac{h_1}{h}\frac{\gamma_1^{\frac12+\eps}}{(\alpha,\gamma_1)^{\frac12}}+\delta^\eps(\alpha,\gamma_1)\frac{X}{\gamma_1 k},
}
where $I_c:=I\cap \{x\equiv c \mod {h_1} \}$. Thus, we can assume $h=1$ and similarly also that $(\alpha,\gamma)=1$. 
Now, applying the Erd\"os-Tur\'an inequality as in Lemma~8 of~\cite{DFI97}, we find
\est{
\bigg|{\sum_{\substack{x\in I,\\(x,\gamma)=1}}\e{\frac{\alpha\overline x}{\gamma}}}\bigg|\leq \frac{X+k}{\gamma k}|S(\alpha,0;\gamma)|+\sum_{1\leq r\leq\frac\gamma2}\frac1r|S(\alpha,r\overline k;\gamma)|.
}
Thus, using Weil's bound for $|S(\alpha,r\overline k;\gamma)|$ and observing that $|S(\alpha,0;\gamma)|$ is a Ramanujan sum and thus is bounded by $(\alpha,\gamma)$, we obtain~\eqref{kseq}. 

To prove~\eqref{kseq2} we can proceed in a similar way. As before, we assume $(\delta,\gamma k)=(v,h)=1$, $k\leq X$ and $\delta=1$. Also, we can assume $d|c$, $(a,c)=1$.

Moreover, we can deduce the case $c>1$ from the case $c=1$ as the following. For the case $\delta>1$, we start by using M\"obius inversion and we obtain
\es{\label{aarf}
\sum_{\substack{x\in I,\\(x,\gamma)=1,\\ (ax+b,c)=d}}\!\chi(x)\e{\frac{\alpha\overline x+\beta x}{\gamma}}&=\sum_{\substack{r|c,\\ d|r}}\mu(r)\sum_{\substack{x\in I,\ (x,\gamma)=1,\\ x\equiv -\overline{a}b\mod{r}}}\hspace{-0.6em}\chi(x)\e{\frac{\alpha\overline x+\beta x}{\gamma}}.\\
}
Next, we let $r=g_1g_2$, where 
$g_1$ is the largest divisor of $r$ such that $(\gamma/(\gamma,g_1),g_1)=1$. Also, we write $\gamma=f_1f_2$ where $f_1=(\gamma,g_1)$ and notice that $(g_1,g_2)=(f_1,f_2)=1$ and $f_1|g_1$, $g_2|f_2$ (and thus also $(f_1,g_2)=(g_1,f_2)=1$). Also,  we can write $\chi$ as $\chi_1\chi_2$, where for $i=1,2$, $\chi_i$ is a Dirichlet character modulo $f_i$. Using the orthogonality of additive character we obtain that the left hand side of~\eqref{aarf} is equal to
\est{
&\sum_{\substack{r|c,\\ d|r}}\frac{\mu(r)}{g_2}\sum_{u\mod {g_2}}\e{\frac{u\overline a b}{g_2}}\sum_{\substack{x\in I,\ (x,\gamma)=1,\\ x\equiv -\overline{a}b\mod{g_1}}}\!\chi(x)\e{\frac{\alpha\overline x+\beta x}{\gamma}+\frac{ux}{g_2}}\\
&\leq \sum_{\substack{r|c,\\ d|r}}\frac{1}{g_2}\sum_{u\mod {g_2}}\bigg|\sum_{\substack{x\in I,\ (x,\gamma)=1,\\ x\equiv -\overline{a}b\mod{g_1}}}\!\chi_2(x)\e{\frac{\alpha\overline {f_1x}+(\beta \overline {f_1}+uf_2/g_2)x}{f_2}}\bigg|,\\
}
since $x \equiv -\overline{a}b \mod {f_1}$ and thus the Chinese remainder theorem gives
\est{
\chi(x)\e{\frac{\alpha\overline x+\beta x}{\gamma}}=\chi_1(-\overline{a}b)\e{-\frac{\alpha a \overline {bf_2}+\beta a\overline b f_{2}}{f_1}}\chi_2(x)\e{\frac{\alpha\overline {f_1x}+\beta \overline {f_1}x}{f_2}}.
}
Thus, applying~\eqref{kseq2} in the case $c=1$ we obtain
\est{
\sum_{\substack{x\in I,\\(x,\gamma)=1,\\ (ax+b,c)=d}}\!\chi(x)\e{\frac{\alpha\overline x+\beta x}{\gamma}}&\ll \sum_{r|c}\Big(\gamma^\eps\frac{h_1'}{h'}\pr{\frac{\gamma_1'}{(\alpha,\gamma_1')}}^{\frac1{2}}+(\alpha,\gamma_1')^{\frac12}{\gamma_1'}^{\frac12+\eps}\frac{X}{\gamma_1' k'}\bigg)\\[-1.5em]
&\ll (\gamma c)^\eps\frac{h_1}{h}\pr{\frac{\gamma_1}{(\alpha,\gamma_1)}}^{\frac1{2}}+(\alpha,\gamma_1)^{\frac12}{\gamma_1}^{\frac12+\eps}\frac{X}{\gamma_1 k},
}
where we used the notation 
\est{
&k':=[g_1,k],\quad h'=({k'}, f_2):=(k,f_2),\quad h_1':=({k'}^\infty, f_2)=(k^{\infty},f_2), \quad \gamma_1':=f_{2}/h_1'
}
since $(c,k)=1$. Moreover $\gamma_1'|\gamma_1$ and ${\gamma_1'}^{\frac12}k'= [g_1,k] f_2^{\frac12}(k^{\infty},f_2)^{-\frac12} \geq \gamma_1^{\frac12}k$  since $f_1|g_1$ and thus $[g_1,k]\geq [f_1,k]=f_1k/(k,f_1)\geq f_1^{\frac12}k(k,f_1)^{-\frac12}$.

The rest of the argument is essentially identical as in the case of~\eqref{kseq}, with the only difference being in the last step, where we need to use Weil's bound for both summands.
\end{proof}

\addresses
\end{document}